\providecommand{\U}[1]{\protect\rule{.1in}{.1in}}
\newtheorem{theorem}{Theorem}
\newtheorem{corollary}[theorem]{Corollary}
\newtheorem{definition}[theorem]{Definition}
\newtheorem{example}[theorem]{Example}
\newtheorem{fact}[theorem]{Fact}
\newenvironment{proof}[1][Proof]{\noindent\textbf{#1.} }{\ \rule{0.5em}{0.5em}}
\begin{document}

\title{\textbf{On Componental Operators }\\\textbf{in Hilbert Space}}
\author{Andrzej Cegielski$^{1}$ and Yair Censor$^{2}\bigskip$\\$^{1}$Institute of Mathematics\\University of Zielona Góra\\Zielona Góra, Poland\\(a.cegielski@wmie.uz.zgora.pl)$\bigskip$\\$^{2}$Department of Mathematics\\University of Haifa\\Mt.\ Carmel, Haifa 3498838, Israel\\(yair@math.haifa.ac.il) }
\date{July 15th, 2021. \\
Revised: November 2nd, 2021 and November 20, 2021.}
\maketitle

\begin{abstract}
We consider a Hilbert space that is a product of a finite number of Hilbert
spaces and operators that are represented by ``componental operators''\ acting
on the Hilbert spaces that form the product space. We attribute operatorial
properties to the componental operators rather than to the full operators. The
operatorial properties that we discuss include nonexpansivity, firm
nonexpansivity, relaxed firm nonexpansivity, averagedness, being a cutter,
quasi-nonexpansivity, strong quasi-nonexpansivity, strict quasi-nonexpansivity
and contraction.

Some relationships between operators whose componental operators have such
properties and operators that have these properties on the product space are
studied. This enables also to define componental fixed point sets and to study
their properties. For componental contractions we offer a variant of the
Banach fixed point theorem.

Our motivation comes from the desire to extend a fully-simultaneous method
that takes into account sparsity of the linear system in order to accelerate
convergence {[}Censor et al., On diagonally relaxed orthogonal projection
methods, \textit{SIAM J. Sci. Comput.} \textbf{30} (2008), 473--504{]}. This
was originally applicable to the linear case only and gives rise to an
iterative process that uses different componental operators during iterations.

\end{abstract}

\section{Introduction\label{sec:intro}}

Let $\mathcal{H}_{j}$, $j\in J:=\{1,2,...,n\},$ be real Hilbert spaces and
$\mathcal{H}:\mathcal{=H}_{1}\times\mathcal{H}_{2}\times\cdots\times
\mathcal{H}_{n}$ for $n\geqslant2$. Consider an operator $T:\mathcal{H}%
\rightarrow\mathcal{H}$ that maps the Hilbert space $\mathcal{H}$ into itself
and is represented by
\begin{equation}
T(x)=((T(x))_{1},(T(x))_{2},...,(T(x))_{n})\text{,}%
\end{equation}
where, for all $j\in J,$ $(T(\cdot))_{j}:\mathcal{H}\rightarrow$
$\mathcal{H}_{j}$ denotes \textquotedblleft the $j$-th componental operator of
$T(\cdot)$\textquotedblright.

In this paper we attribute operatorial properties to the componental operators
of an operator $T$. To explain what we mean, let us take, by way of example,
the well-known nonexpansivness property. We define that the operator $T$ is
\textquotedblleft$j$\textit{-nonexpansive} ($j$-NE)\textquotedblright\ if for
all $x,y\in\mathcal{H}$
\begin{equation}
\Vert(T(x))_{j}-(T(y))_{j}\Vert_{j}\leq\Vert x_{j}-y_{j}\Vert_{j}\text{,}%
\end{equation}
for some $j\in J,$ where $\Vert\cdot\Vert_{j}$ is the norm in $\mathcal{H}%
_{j}.$ If $T$ is $j$-NE for all $j\in J$ then we say that $T$ is
\textquotedblleft component-wise NE (CW-NE)\textquotedblright.

Obviously, such definitions imply that if $T$ is CW-NE then it is NE but not
vice versa, making the set of CW-NE operators a proper subset of the NE
operators. In this sense, a CW-NE operator is ``stronger''\ than an NE
operator which is not necessarily CW-NE.

We look at various operatorial properties of the componental operators of an
operator $T$ and investigate their relationships with such properties of the
operator $T$ itself. To do so, we define the notions of $j$%
\textit{-nonexpansive} ($j$-NE), $j$-\textit{firmly nonexpansive} ($j$-FNE),
$j$-\textit{relaxed firmly nonexpansive} ($j$-RFNE), $j$-\textit{averaged}
($j$-AV), $j$\textit{-cutter, }$j$\textit{-quasi-nonexpansive} ($j$-QNE),
$(\rho_{j},j)$\textit{-strongly quasi-nonexpansive} ($(\rho_{j},j)$-SQNE),
$j$\textit{-strictly quasi-nonexpansive} ($j$-sQNE) and $(\alpha_{j}%
,j)$\textit{-contraction} ($(\alpha_{j},j)$-CONT) and their associated
\textquotedblleft CW-X\textquotedblright\ properties, where \textquotedblleft
X\textquotedblright\ stands for any of the, above mentioned, properties: NE,
FNE, RFNE, AV, cutter, QNE, SQNE, sQNE or CONT. To say that $T$ is CW-X means
that it is \textquotedblleft$j$-X\textquotedblright\ for all $j\in J.$

For $j\in J$ we also define the \textquotedblleft$j$-th fixed point set of
$T$\textquotedblright\ by
\begin{equation}
\operatorname*{Fix}{}^{j}T:=\{z\in\mathcal{H}\mid(T(z))_{j}=z_{j}\}
\end{equation}
and study its properties.

Our motivation comes from a desire to study an iterative process that uses, as
the iterations proceed, different componental operators of the operator $T.$
We devote below a special section to describe this motivating topic.

Working with ``blocks''\ of variables is a common and wide-ranging subject in
the study of iterative processes. An early approach to analyzing iterative
processes over Cartesian product sets and ``component solution methods''\ can
be found in the book of Bertsekas and Tsitsiklis \cite[Subsection
3.1.2]{bertsekas-book}. A product space formulation, which became classical by
now, for feasibility-seeking and optimization problems, is the work of Pierra
\cite{Pierra1984}. A kind of stochastic component solution methods were
studied in \cite{CP18}.

Here we set forth a general framework that fits a large variety of operatorial
properties. It remains to be discovered whether the subsets of operators of
the form ``CW-X'', for any of the above mentioned properties ``X'', can
generate stronger results in fixed point theory due to the fact that a CW-X
operator is ``stronger''\ than its associated operator with property X which
is not CW-X.

The paper is structured as follows. In Subsections \ref{subsec:NEs} and
\ref{sec:CW-QNE} we develop our framework of componental properties of
operators. In Subsection \ref{subsec:contract} we define $\alpha_{j}%
$\textit{-contractions }and formulate\textit{ }the Banach fixed point theorem
for them. In Section \ref{sec:regularity} we define and study the componental
regularity property, and in Section \ref{sec:nonlin-drop} we present and
analyze our motivating case of extending the fully-simultaneous
Diagonally-Relaxed Orthogonal Projections (DROP) method of \cite{CEHN08},
originally applicable to the linear case only.

\section{Componental operators\label{sec:CW-NE}}

Let $\mathcal{H}_{j}$ be a real Hilbert space with inner product $\langle
\cdot,\cdot\rangle_{j}$ and induced norm $\Vert\cdot\Vert_{j}$, $j\in
J:=\{1,2,...,n\}$. Define the product Hilbert space $\mathcal{H}%
:=\mathcal{H}_{1}\times\cdots\times\mathcal{H}_{n}$ with inner product
$\langle\cdot,\cdot\rangle:\mathcal{H\times H}\rightarrow%
\mathbb{R}
$ defined by $\langle x,y\rangle$ $:=\sum_{j=1}^{n}\langle x_{j},y_{j}%
\rangle_{j}$ and the induced norm $\Vert\cdot\Vert:\mathcal{H}\rightarrow%
\mathbb{R}
$ defined by $\Vert x\Vert:=\sqrt{\sum_{j=1}^{n}\Vert x_{j}\Vert_{j}^{2}}$,
where $x=(x_{1},x_{2},...,x_{n})\in\mathcal{H}$, $y=(y_{1},y_{2},...,y_{n}%
)\in\mathcal{H}$ with $x_{j},y_{j}\in\mathcal{H}_{j}$, $j\in J$. Let
$T:\mathcal{H}\rightarrow\mathcal{H}$, i.e.,
\begin{equation}
T(x)=((T(x))_{1},(T(x))_{2},...,(T(x))_{n})\text{,}\label{e-T}%
\end{equation}
where $(T(x))_{j}\in\mathcal{H}_{j}$ denotes \textquotedblleft the $j$-th
componental operator of $T(x)$\textquotedblright, $j\in J$. Denote by
$T_{\lambda}:=\operatorname*{Id}+\lambda(T-\operatorname*{Id})$ the $\lambda
$-relaxation of $T$, where $\lambda\geq0$ and $\operatorname*{Id}$ denotes the identity.

\subsection{Componental nonexpansive, firmly nonexpansive, relaxed firmly
nonexpansive and averaged operators\label{subsec:NEs}}

Bearing in mind the well-known definitions of operators that are
\textit{nonexpansive }(NE)\textit{, firmly nonexpansive} (FNE),
\textit{relaxed firmly nonexpansive} (RFNE), or \textit{averaged} (AV), see,
e.g., \cite{cegielski-book} or \cite{bau-com-book}, we introduce the following
new definitions. We refer to any of these types of operators by the general
name of ``\textit{componental operators}''.

\begin{definition}
\label{d1}%
\rm\
Let $j\in J$. We say that an operator $T:\mathcal{H}\rightarrow\mathcal{H}$ is:

\begin{enumerate}
\item[(i)] $j$\textit{-nonexpansive} ($j$-NE) if for all $x,y\in\mathcal{H}$
\begin{equation}
\Vert(T(x))_{j}-(T(y))_{j}\Vert_{j}\leq\Vert x_{j}-y_{j}\Vert_{j}\text{;}
\label{e-j-NE}%
\end{equation}

\item[(ii)] $j$-\textit{firmly nonexpansive} ($j$-FNE) if for all
$x,y\in\mathcal{H}$
\begin{equation}
\langle(T(x))_{j}-(T(y))_{j},x_{j}-y_{j}\rangle_{j}\geq\Vert(T(x))_{j}%
-(T(y))_{j}\Vert_{j}^{2}\text{;}%
\end{equation}

\item[(iii)] $j$-\textit{relaxed firmly nonexpansive} ($j$-RFNE), if there
exist a constant $\lambda\in\lbrack0,2]$ and a $j$-FNE operator $U:\mathcal{H}%
\rightarrow\mathcal{H}$ such that $(T(\cdot))_{j}$ is a $\lambda$-relaxation
of $(U(\cdot))_{j}$;

\item[(iv)] $j$-\textit{averaged} ($j$-AV), if there exist a constant
$\alpha\in(0,1)$ and a $j$-NE operator $U$ such that
\begin{equation}
(T(x))_{j}=(1-\alpha)x_{j}+\alpha(U(x))_{j};
\end{equation}

\item[(v)] \textit{component-wise NE} (CW-FNE, CW-RFNE, CW-AV) if $T$ is
$j$-NE ($j$-FNE, $j$-RFNE, $j$-AV) for all $j\in J$.
\end{enumerate}

Alternatively, if we want to emphasize the constants $\lambda$ in (iii) and
$\alpha$ in (iv) explicitly, then we say that $T$ is $(\lambda,j)$-relaxed
firmly nonexpansive ($(\lambda,j)$-RFNE) in (iii) and $(\alpha,j)$-averaged
($(\alpha,j)$-AV) in (iv).
\end{definition}

Clearly, a CW-NE (CW-FNE) operator is NE (FNE). Note, however, that the
converse is not true.

\begin{example}%
\rm\
The operator $T:%
\mathbb{R}%
^{2}\rightarrow%
\mathbb{R}%
^{2}$ defined by $T(x_{1},x_{2})=(x_{2},x_{1})$ is NE but not CW-NE.
\end{example}

\begin{fact}
\label{f1}Let $j\in J$ and let $T$ be $j$-NE. Let $x,y\in\mathcal{H}$ be such
that $x_{j}=y_{j}$. Then
\begin{equation}
(T(x))_{j}=(T(y))_{j}\text{.} \label{e-Txj}%
\end{equation}
Thus, one can define an operator $T^{j}:\mathcal{H}_{j}\rightarrow
\mathcal{H}_{j}$ by
\begin{equation}
T^{j}(x_{j}):=(T(x))_{j}. \label{e-Tjx}%
\end{equation}

\end{fact}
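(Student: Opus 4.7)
The plan is essentially a one-line application of the $j$-NE inequality, followed by a well-definedness check for the induced operator $T^j$.

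First I would apply inequality \eqref{e-j-NE} from Definition \ref{d1}(i) directly to the given pair $x,y\in\mathcal{H}$ with $x_j=y_j$: this gives
\begin{equation*}
\|(T(x))_j-(T(y))_j\|_j \leq \|x_j-y_j\|_j = 0,
\end{equation*}
so by the definiteness of the norm $\|\cdot\|_j$ on $\mathcal{H}_j$ we conclude $(T(x))_j=(T(y))_j$, which is exactly \eqref{e-Txj}.

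Next I would explain how this justifies the definition \eqref{e-Tjx}. The issue is that the proposed formula $T^j(x_j):=(T(x))_j$ appears to depend on the full point $x=(x_1,\dots,x_n)\in\mathcal{H}$ rather than only on the $j$-th coordinate $x_j\in\mathcal{H}_j$. The claim \eqref{e-Txj} removes this ambiguity: for any two points $x,y\in\mathcal{H}$ that share the same $j$-th coordinate $x_j=y_j$ (but may differ in the other coordinates), the $j$-th components of their images under $T$ coincide. Hence the map from $\mathcal{H}_j$ to $\mathcal{H}_j$ sending $x_j$ to $(T(x))_j$ is well-defined independently of the choice of extension of $x_j$ to a full vector in $\mathcal{H}$.

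There is no genuine obstacle here: the only ingredient beyond the definition of $j$-NE is positive definiteness of the Hilbert-space norm on $\mathcal{H}_j$. The proof therefore consists entirely of invoking \eqref{e-j-NE} and observing the consequence for well-definedness of $T^j$.
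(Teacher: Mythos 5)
Your proof is correct and is essentially the argument the paper intends: the paper simply says the equality follows directly from Definition \ref{d1}, and you have spelled out that one-line derivation (the $j$-NE inequality plus positive definiteness of $\Vert\cdot\Vert_{j}$) together with the resulting well-definedness of $T^{j}$. Nothing further is needed.
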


\begin{proof}
Equality (\ref{e-Txj}) follows directly from Definition \ref{d1}.
\end{proof}

\bigskip{}

Fact \ref{f1} has a consequence that the componental operators of a CW-NE
operator $T$ are NE operators.

\begin{fact}
\label{f2}If an operator $T:\mathcal{H}\rightarrow\mathcal{H}$ is CW-NE, then
for any $x\in\mathcal{H}$ it holds
\begin{equation}
T(x)=(T^{1}(x_{1}),T^{2}(x_{2}),...,T^{n}(x_{n})) \label{e-decomp}%
\end{equation}
and $T^{j},j\in J$, are NE. Conversely, if the operators $T^{j}:\mathcal{H}%
_{j}\rightarrow\mathcal{H}_{j}$, $j\in J$, are NE then the operator $T$
defined by (\ref{e-decomp}) is CW-NE.
\end{fact}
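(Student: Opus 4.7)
The plan is to unpack both directions directly from the definitions, using Fact~\ref{f1} for the forward implication. For the forward direction, assume $T$ is CW-NE, so $T$ is $j$-NE for each $j \in J$. Fact~\ref{f1} then guarantees that $(T(x))_j$ depends only on $x_j$, so the operator $T^j\colon \mathcal{H}_j \to \mathcal{H}_j$ from \eqref{e-Tjx} is well-defined. Substituting $T^j(x_j) = (T(x))_j$ into the representation \eqref{e-T} yields the decomposition \eqref{e-decomp} immediately.

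To verify that each $T^j$ is NE, I fix arbitrary $u_j, v_j \in \mathcal{H}_j$ and lift them to any pair $x, y \in \mathcal{H}$ with $x_j = u_j$ and $y_j = v_j$ (for instance, by setting all other coordinates to zero). Applying the $j$-NE inequality \eqref{e-j-NE} to $x, y$ and using the identity $T^j(u_j) = (T(x))_j$ and $T^j(v_j) = (T(y))_j$ yields $\|T^j(u_j) - T^j(v_j)\|_j \leq \|u_j - v_j\|_j$, as required.

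For the converse, assume each $T^j$ is NE and define $T$ by \eqref{e-decomp}. Then for every $j \in J$ and every $x, y \in \mathcal{H}$, the $j$-th component of $T(x)$ is by construction equal to $T^j(x_j)$, so
\begin{equation}
\|(T(x))_j - (T(y))_j\|_j = \|T^j(x_j) - T^j(y_j)\|_j \leq \|x_j - y_j\|_j,
\end{equation}
which is precisely the $j$-NE condition \eqref{e-j-NE}. Since this holds for every $j \in J$, the operator $T$ is CW-NE.

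There is no substantial obstacle here; the statement is essentially bookkeeping that formalizes the intuitive fact that CW-NE operators are exactly the ``diagonal'' products of NE operators on each factor. The one subtle point is ensuring that $T^j$ is unambiguously defined, which is the content of Fact~\ref{f1} and is the reason the forward direction requires more than a one-line observation.
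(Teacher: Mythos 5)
Your proof is correct and follows essentially the same route as the paper's (which simply cites Fact~\ref{f1} together with the representation \eqref{e-T}); you have merely spelled out the well-definedness of $T^{j}$, the lifting argument for its nonexpansivity, and the direct verification of the converse. No gaps.
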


\begin{proof}
In view of (\ref{e-T}), Equality (\ref{e-decomp}) follows from Fact \ref{f1}.
\end{proof}

\bigskip{}

The following corollary shows that we can set a common constant $\alpha
\in(0,1)$ and a common CW-NE operator $U$ in the definition of a CW-AV
operator $T$.

\begin{corollary}
An operator $T:\mathcal{H}\rightarrow\mathcal{H}$ is CW-AV if and only if
there is a constant $\alpha\in(0,1)$ and a CW-NE operator $U$ such that
$T=V_{\alpha}:=(1-\alpha)\operatorname*{Id}+\alpha U$.
\end{corollary}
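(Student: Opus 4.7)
The reverse implication is essentially immediate: if $T=(1-\alpha)\operatorname*{Id}+\alpha U$ with $U$ CW-NE and $\alpha\in(0,1)$, then for every $j\in J$ we have $(T(x))_{j}=(1-\alpha)x_{j}+\alpha(U(x))_{j}$ and $U$ is $j$-NE, so $T$ is $(\alpha,j)$-AV. Hence $T$ is CW-AV, with a common $\alpha$ and a common $U$ that already satisfy the enriched conclusion.

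The substance lies in the forward implication, where the definition of CW-AV only grants, for each $j\in J$ separately, some $\alpha_{j}\in(0,1)$ and some $j$-NE operator $U_{j}$ with $(T(x))_{j}=(1-\alpha_{j})x_{j}+\alpha_{j}(U_{j}(x))_{j}$. My plan is to synchronize these data. Since $J$ is finite and each $\alpha_{j}\in(0,1)$, set $\alpha:=\max_{j\in J}\alpha_{j}\in(0,1)$. Then I would define $U:\mathcal{H}\rightarrow\mathcal{H}$ componentally by
\begin{equation}
(U(x))_{j}:=\frac{\alpha-\alpha_{j}}{\alpha}\,x_{j}+\frac{\alpha_{j}}{\alpha}\,(U_{j}(x))_{j},\qquad j\in J.
\end{equation}
A direct expansion shows that $(1-\alpha)x_{j}+\alpha(U(x))_{j}=(1-\alpha_{j})x_{j}+\alpha_{j}(U_{j}(x))_{j}=(T(x))_{j}$, so $T=(1-\alpha)\operatorname*{Id}+\alpha U$.

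The only nontrivial check is that the synchronized $U$ is CW-NE. For each $j\in J$, the coefficients $(\alpha-\alpha_{j})/\alpha$ and $\alpha_{j}/\alpha$ are nonnegative and sum to one; therefore the triangle inequality combined with $j$-nonexpansiveness of $U_{j}$ yields
\begin{equation}
\Vert(U(x))_{j}-(U(y))_{j}\Vert_{j}\leq\frac{\alpha-\alpha_{j}}{\alpha}\Vert x_{j}-y_{j}\Vert_{j}+\frac{\alpha_{j}}{\alpha}\Vert x_{j}-y_{j}\Vert_{j}=\Vert x_{j}-y_{j}\Vert_{j},
\end{equation}
so $U$ is $j$-NE for every $j$, i.e., CW-NE. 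The step I expect to be the main (and only) obstacle is precisely the choice of the uniform $\alpha$: one must ensure $\alpha<1$, which follows from the finiteness of $J$ and the strict upper bound $\alpha_{j}<1$, and one must verify that the convex-combination construction above preserves the $j$-NE property componentally without inadvertently destroying the equality $T=(1-\alpha)\operatorname*{Id}+\alpha U$.
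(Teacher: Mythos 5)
Your proof is correct and takes essentially the same route as the paper: both set $\alpha:=\max_{j\in J}\alpha_{j}$ and reassemble $U$ componentally as the convex combination $(1-\alpha_{j}/\alpha)\operatorname*{Id}+(\alpha_{j}/\alpha)U_{j}$ in the $j$-th slot. The only (cosmetic) difference is that the paper first decomposes each $U_{j}$ via Fact \ref{f2} into operators on the factor spaces $\mathcal{H}_{j}$, whereas you verify the $j$-NE property of the synchronized $U$ directly from the $j$-nonexpansiveness of $U_{j}$, which is slightly more economical.
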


\begin{proof}
The ``if''\ part is obvious. Suppose that $T$ is CW-AV. Then for any $j\in J$
there are $\alpha_{j}\in(0,1)$ and CW-NE operators $U_{j}:\mathcal{H}%
\rightarrow\mathcal{H}$ such that for all $x\in\mathcal{H}$ it holds
\begin{equation}
(T(x))_{j}=(1-\alpha_{j})x_{j}+\alpha_{j}(U_{j}(x))_{j}.
\end{equation}
By Fact \ref{f2}, $U_{j}(x)=(U_{j}^{1}(x_{1}),U_{j}^{2}(x_{2}),...,U_{j}%
^{n}(x_{n}))$, $j\in J$, $x\in\mathcal{H}$, and all operators $U_{j}%
^{i}:\mathcal{H}_{i}\rightarrow\mathcal{H}_{i}$, $i,j\in J$, defined by
$U_{j}^{i}(x_{i}):=(U_{j}(x))_{i}$ are NE. Let $\alpha:=\max_{j\in J}%
\alpha_{j}$ and $\beta_{j}:=\alpha_{j}/\alpha\in(0,1]$. Define $V_{j}%
:=(U_{j}^{j})_{\beta_{j}}=(1-\beta_{j})\operatorname*{Id}+\beta_{j}U_{j}^{j}$,
$j\in J$. Clearly, $U_{j}^{j}=(V_{j})_{\beta_{j}^{-1}}$, $j\in J$. Define
$V:\mathcal{H}\rightarrow\mathcal{H}$ by $V(x)=(V_{1}(x_{1}),V_{2}%
(x_{2}),...,V_{n}(x_{n}))$, $x\in\mathcal{H}$. The operators $V_{j}$, $j\in
J$, are NE as convex combinations of NE operators $U_{j}^{j}$ and
$\operatorname*{Id}$. Again, by Fact \ref{f2}, $V$ is CW-NE. We have
\begin{equation}
(T(x))_{j}=(U_{j}^{j})_{\alpha_{j}}(x_{j})=((V_{j})_{\beta_{j}^{-1}}%
)_{\alpha_{j}}(x_{j})=(V_{j})_{\alpha}(x_{j})=(V_{\alpha}(x))_{j}\text{.}%
\end{equation}
Thus, $T=V_{\alpha}$, where $V$ is CW-NE and $\alpha\in(0,1)$.

In a similar way one can prove that we can set a common constant $\lambda
\in\lbrack0,2]$ and a common CW-FNE operator $U$ in the definition of a
CW-RFNE operator $T$. This yields that a CW-RFNE operator is RFNE.
\end{proof}

\begin{fact}
\label{f3}Let $j\in J$ and let $T:\mathcal{H}\rightarrow\mathcal{H}$. The
following conditions are equivalent:

\begin{enumerate}
\item[$\mathrm{(i)}$] $T$ is $j$-FNE;

\item[$\mathrm{(ii)}$] $T_{\lambda}$ is $j$-NE for all $\lambda\in\lbrack0,2]$;

\item[$\mathrm{(iii)}$] $T$ has the form $T=\frac{1}{2}(\operatorname*{Id}+S)$
for some $j$-NE operator $S$, i.e., $T$ is $(\frac{1}{2},j)$-AV;

\item[$\mathrm{(iv)}$] $\operatorname*{Id}-T$ is $j$-FNE;

\item[$\mathrm{(v)}$] The following inequality holds for all $x,y\in
\mathcal{H}$
\begin{equation}
\Vert(T(x))_{j}-(T(y))_{j}\Vert_{j}^{2}\leq\Vert x_{j}-y_{j}\Vert_{j}%
^{2}-\Vert(x_{j}-(T(x))_{j})-(y_{j}-(T(y))_{j})\Vert_{j}^{2}.
\end{equation}

\end{enumerate}
\end{fact}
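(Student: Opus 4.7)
The plan is to prove the five conditions equivalent by mirroring the classical proofs of the analogous equivalences for FNE in a single Hilbert space, carrying them out throughout for the $j$-th componental picture. Every quantity involved in each of the five conditions lives entirely inside $\mathcal{H}_{j}$ (the pairs $x_{j},y_{j}$, $(T(x))_{j},(T(y))_{j}$, and $\Vert\cdot\Vert_{j}$, $\langle\cdot,\cdot\rangle_{j}$), so no cross-component terms ever appear and we can work locally in $\mathcal{H}_{j}$. I would organize the argument as a ring (i)$\Rightarrow$(v)$\Rightarrow$(iii)$\Rightarrow$(ii)$\Rightarrow$(i), and separately establish (i)$\Leftrightarrow$(iv).

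For (i)$\Leftrightarrow$(v), I would apply the polarization identity $2\langle a,b\rangle_{j}=\Vert a\Vert_{j}^{2}+\Vert b\Vert_{j}^{2}-\Vert a-b\Vert_{j}^{2}$ in $\mathcal{H}_{j}$ with $a:=(T(x))_{j}-(T(y))_{j}$ and $b:=x_{j}-y_{j}$; the defining $j$-FNE inequality $\langle a,b\rangle_{j}\geq\Vert a\Vert_{j}^{2}$ then rearranges directly into the inequality of (v), once one observes that $a-b=-(x_{j}-(T(x))_{j})+(y_{j}-(T(y))_{j})$ has the same $\mathcal{H}_{j}$-norm as the quantity appearing in (v). For (iii)$\Leftrightarrow$(i), set $(S(x))_{j}=2(T(x))_{j}-x_{j}$, expand $\Vert(S(x))_{j}-(S(y))_{j}\Vert_{j}^{2}$, and compare with $\Vert x_{j}-y_{j}\Vert_{j}^{2}$; after cancellation the $j$-NE of $S$ reduces to the $j$-FNE of $T$ by elementary algebra.

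For (ii)$\Rightarrow$(iii), take $\lambda=2$ so that $T_{2}=2T-\operatorname*{Id}$ plays the role of $S$. For (i)$\Rightarrow$(ii), expand
\[
\Vert(T_{\lambda}(x))_{j}-(T_{\lambda}(y))_{j}\Vert_{j}^{2}=\Vert(1-\lambda)(x_{j}-y_{j})+\lambda((T(x))_{j}-(T(y))_{j})\Vert_{j}^{2}
\]
and invoke the $j$-FNE inequality; the coefficient $\lambda(2-\lambda)\geq 0$ for $\lambda\in[0,2]$ is precisely what is needed for the estimate to collapse to $\Vert x_{j}-y_{j}\Vert_{j}^{2}$. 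For (i)$\Leftrightarrow$(iv), write down the $j$-FNE condition for $\operatorname*{Id}-T$, namely $\langle b-a,b\rangle_{j}\geq\Vert b-a\Vert_{j}^{2}$, and expand both sides; the inequality collapses symmetrically to $\langle a,b\rangle_{j}\geq\Vert a\Vert_{j}^{2}$, which is the $j$-FNE condition for $T$.

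I do not anticipate any serious obstacle: all steps are routine identities in $\mathcal{H}_{j}$, strictly analogous to the classical proofs for FNE found in, e.g., \cite{cegielski-book} or \cite{bau-com-book}. The only care required is bookkeeping of the $j$ subscripts and verifying that every polarization/expansion produces only $\mathcal{H}_{j}$-terms in $x_{j}$, $y_{j}$, $(T(x))_{j}$, $(T(y))_{j}$, which is automatic from the definition of the componental norm and inner product.
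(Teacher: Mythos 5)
Your proof is correct and is exactly the argument the paper has in mind: the paper's proof is simply the citation ``Similar to \cite[Theorem 2.2.10]{cegielski-book}'', and your write-up carries out that classical chain of equivalences verbatim in the $j$-th component, where all quantities indeed live in $\mathcal{H}_{j}$ so the polarization and expansion identities go through unchanged. No gaps.
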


\begin{proof}
Similar to \cite[Theorem 2.2.10]{cegielski-book}.
\end{proof}

\bigskip

Denote $I:=\{1,2,...,m\}$, where $m\in\mathbb{N}$.

\begin{corollary}
Let $j\in J$, $T_{i}:\mathcal{H}\rightarrow\mathcal{H}$ be $j$-FNE, for all
$i\in I$ and $w=(w_{1},w_{2},...,w_{m})\in\Delta_{m}:=\{w\in%
\mathbb{R}
^{m}\mid w\geq0,$ $\sum_{i=1}^{m}w_{i}=1\}.$ Then the operator $T:\mathcal{H}%
\rightarrow\mathcal{H},$ defined by $T(x):=\sum_{i=1}^{m}w_{i}T_{i}(x),$ is
$j$-FNE.
\end{corollary}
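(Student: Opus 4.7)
The plan is to deduce the $j$-FNE property of $T$ from the characterization (v) in Fact~\ref{f3}, namely the inequality
\[
\Vert (T(x))_j-(T(y))_j\Vert_j^2+\Vert (x_j-(T(x))_j)-(y_j-(T(y))_j)\Vert_j^2\le\Vert x_j-y_j\Vert_j^2.
\]
Applying (v) to each $T_i$ (which is $j$-FNE by hypothesis) gives the analogous inequality with $T$ replaced by $T_i$. First, I would multiply each such inequality by $w_i\ge 0$ and sum over $i\in I$; since $\sum_{i=1}^m w_i=1$, the right-hand side remains $\Vert x_j-y_j\Vert_j^2$ while the left-hand side becomes a weighted sum of two squared-norm terms.

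Next, I would use the definition $T:=\sum_{i=1}^m w_i T_i$, which yields $(T(x))_j=\sum_{i=1}^m w_i (T_i(x))_j$, and, exploiting $\sum_{i=1}^m w_i=1$, also
\[
(x_j-(T(x))_j)-(y_j-(T(y))_j)=\sum_{i=1}^m w_i\bigl((x_j-(T_i(x))_j)-(y_j-(T_i(y))_j)\bigr).
\]
Both differences are therefore convex combinations indexed by $i$, so two applications of convexity of $\Vert\cdot\Vert_j^2$ (Jensen's inequality) give
\[
\Vert (T(x))_j-(T(y))_j\Vert_j^2\le\sum_{i=1}^m w_i\Vert (T_i(x))_j-(T_i(y))_j\Vert_j^2
\]
and the analogous bound for the second squared-norm term. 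Chaining these two bounds with the weighted sum from the previous step delivers (v) for $T$, which in turn forces $T$ to be $j$-FNE by Fact~\ref{f3}.

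No genuine obstacle is expected; the only care required is to ensure that the second difference $(x_j-(T(x))_j)-(y_j-(T(y))_j)$ also expresses as a $w$-convex combination of the corresponding differences for the $T_i$, which is where the normalization $\sum_{i=1}^m w_i=1$ enters crucially. An equally short alternative is to invoke Fact~\ref{f3}(iii) and write $T_i=\tfrac{1}{2}(\mathrm{Id}+S_i)$ with $S_i:=2T_i-\mathrm{Id}$ being $j$-NE, so that $T=\tfrac{1}{2}(\mathrm{Id}+S)$ with $S:=\sum_{i=1}^m w_i S_i$; a single triangle-inequality computation shows $S$ is $j$-NE, whence $T$ is $j$-FNE.
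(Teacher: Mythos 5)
Your argument is correct, and it is essentially the proof the paper has in mind: the paper simply defers to the classical result \cite[Corollary 2.2.20]{cegielski-book}, and your derivation via the characterization in Fact~\ref{f3}(v) together with convexity of $\Vert\cdot\Vert_j^2$ (or, equivalently, your alternative via Fact~\ref{f3}(iii) and the fact that a convex combination of $j$-NE operators is $j$-NE) is exactly the standard adaptation of that proof to the componental setting. The one point you rightly flag --- that $(x_j-(T(x))_j)-(y_j-(T(y))_j)$ is the $w$-convex combination of the corresponding differences for the $T_i$ because $\sum_{i=1}^m w_i=1$ --- is handled correctly, so there is nothing to add.
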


\begin{proof}
Similar to \cite[Corollary 2.2.20]{cegielski-book}.
\end{proof}

\bigskip{}

For a $j$-NE operator $T_{i}:\mathcal{H}\rightarrow\mathcal{H},$ the operators
$T_{i}^{j}:\mathcal{H}_{j}\rightarrow\mathcal{H}_{j}$ are defined in a similar
way as in (9), i.e.,
\begin{equation}
T_{i}^{j}(x_{j}):=(T_{i}(x))_{j}\text{, }i\in I,\text{ }j\in J.\label{e-Tijxj}%
\end{equation}

\begin{fact}
\label{f4}Let $j\in J$, $w_{ij}\geq0$, $i\in I$, $w_{\cdot j}:=\sum_{i=1}%
^{m}w_{ij}>0$. If $T_{i}:\mathcal{H}\rightarrow\mathcal{H}$, $i\in I$, are
$j$-FNE, then the operator $T:\mathcal{H}\rightarrow\mathcal{H}$ defined by
its components $T^{j}:\mathcal{H}_{j}\rightarrow\mathcal{H}_{j}$,%
\begin{equation}
T^{j}(x_{j}):=x_{j}+\lambda\sum_{i=1}^{m}w_{ij}(T_{i}^{j}(x_{j})-x_{j}%
)\text{,} \label{e-Tjxj}%
\end{equation}
where $\lambda\in\lbrack0,2/w_{\cdot j}]$, is $(\lambda w_{\cdot j},j)$-RFNE.
\end{fact}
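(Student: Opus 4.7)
The plan is to reduce the statement to the preceding corollary (on convex combinations of $j$-FNE operators) by normalizing the weights so they form a convex combination, and then recognizing the displayed formula as a relaxation of this convex combination.

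First I would introduce the normalized weights $v_{ij}:=w_{ij}/w_{\cdot j}$ (legitimate since $w_{\cdot j}>0$) and the scaled parameter $\mu:=\lambda w_{\cdot j}$. The constraints in the hypothesis then translate cleanly: $v_{ij}\geq 0$ with $\sum_{i=1}^{m}v_{ij}=1$, so $(v_{1j},\ldots,v_{mj})\in\Delta_{m}$; and $\lambda\in[0,2/w_{\cdot j}]$ is equivalent to $\mu\in[0,2]$. The point of this rescaling is that with these constants, the defining formula (\ref{e-Tjxj}) becomes
\begin{equation}
T^{j}(x_{j})=x_{j}+\mu\Bigl(\sum_{i=1}^{m}v_{ij}T_{i}^{j}(x_{j})-x_{j}\Bigr)=(1-\mu)x_{j}+\mu\sum_{i=1}^{m}v_{ij}T_{i}^{j}(x_{j}).
\end{equation}

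Next I would apply the preceding corollary to the operator $U:\mathcal{H}\to\mathcal{H}$ defined by $U(x):=\sum_{i=1}^{m}v_{ij}T_{i}(x)$ (the other components can be set arbitrarily, or the analogous convex combination can be used everywhere). Since each $T_{i}$ is $j$-FNE and $(v_{ij})_{i\in I}\in\Delta_{m}$, that corollary yields that $U$ is $j$-FNE. Using the definition (\ref{e-Tijxj}) of $T_{i}^{j}$, the $j$-th componental operator of $U$ satisfies $(U(x))_{j}=\sum_{i=1}^{m}v_{ij}(T_{i}(x))_{j}=\sum_{i=1}^{m}v_{ij}T_{i}^{j}(x_{j})$, which matches the right-hand side of the display above.

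Combining the two steps, $(T(x))_{j}=T^{j}(x_{j})=(1-\mu)x_{j}+\mu(U(x))_{j}$, i.e., the $j$-th componental operator of $T$ is exactly the $\mu$-relaxation of the $j$-th componental operator of the $j$-FNE operator $U$, with $\mu=\lambda w_{\cdot j}\in[0,2]$. By Definition \ref{d1}(iii), this is precisely the statement that $T$ is $(\lambda w_{\cdot j},j)$-RFNE.

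I do not expect any real obstacle here; the substance of the argument is already encoded in the preceding corollary. The only thing to watch is the accounting of the two scales: the weights must be re-normalized to a probability vector before invoking that corollary, and the factor $w_{\cdot j}$ absorbed by $\lambda$ must be seen to bring the relaxation parameter into the admissible interval $[0,2]$ required by Definition \ref{d1}(iii).
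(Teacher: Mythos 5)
Your proof is correct and follows essentially the same route as the paper's: normalize the weights to $v_{i}:=w_{ij}/w_{\cdot j}\in\Delta_{m}$, use the corollary on convex combinations of $j$-FNE operators to see that the combination is $j$-FNE, and recognize $T^{j}$ as its $\lambda w_{\cdot j}$-relaxation with $\lambda w_{\cdot j}\in[0,2]$. The only cosmetic difference is that the paper phrases the convex combination as an operator $S_{j}$ on $\mathcal{H}_{j}$ (citing the FNE-combination result from Cegielski's book) while you keep the combination as an operator on all of $\mathcal{H}$; the substance is identical.
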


\begin{proof}
Clearly, $T_{i}^{j}$ are FNEs, $i\in I$. Define an operator $S_{j}%
:\mathcal{H}_{j}\rightarrow\mathcal{H}_{j}$ by
\begin{equation}
S_{j}(x_{j}):=\sum_{i=1}^{m}v_{i}T_{i}^{j}(x_{j})\text{,} \label{e-Sjxj}%
\end{equation}
where $v_{i}:=\frac{{\textstyle w_{ij}}}{{\textstyle w_{\cdot j}}}$. Since
$\sum_{i=1}^{m}v_{i}=1$, the operator $S_{j}$ is FNE, see \cite[Corollary
2.2.20]{cegielski-book}, and we have that $T^{j}=(S_{j})_{\lambda w_{\cdot j}%
}$, the $(\lambda w_{\cdot j})$-relaxation of $S_{j},$ i.e., $T^{j}$ is
$(\lambda w_{\cdot j},j)$-RFNE.
\end{proof}

\begin{corollary}
Let $w_{ij}\geq0$, $w_{\cdot j}:=\sum_{i=1}^{m}w_{ij}>0,$ $i\in I,$ $j\in J$,
and $w:=\max_{j\in J}w_{\cdot j}$. If $T_{i}:\mathcal{H}\rightarrow
\mathcal{H}$, $i\in I$, are CW-FNE, then the operator $T$ defined by
\begin{equation}
(T(x))_{j}:=x_{j}+\lambda\sum_{i=1}^{m}w_{ij}((T_{i}(x_{j}))_{j}%
-x_{j})\text{,} \label{e-Txj2}%
\end{equation}
$j\in J$, where $\lambda\in\lbrack0,2/w]$, is $(\lambda w)$-CW-RFNE.
\end{corollary}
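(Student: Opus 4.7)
The plan is to apply Fact \ref{f4} component-by-component and then rescale each resulting $j$-dependent relaxation parameter $\lambda w_{\cdot j}$ to the single common value $\lambda w$. Fix $j\in J$. Since each $T_{i}$ is CW-FNE it is in particular $j$-FNE, and since $w\geq w_{\cdot j}>0$ the parameter range $\lambda\in[0,2/w]\subseteq[0,2/w_{\cdot j}]$ is admissible, so Fact \ref{f4} applies and yields $T^{j}=(S_{j})_{\lambda w_{\cdot j}}$, where $S_{j}(x_{j}):=\sum_{i=1}^{m}(w_{ij}/w_{\cdot j})T_{i}^{j}(x_{j})$ is FNE.

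Next I re-express $T^{j}$ as a $\lambda w$-relaxation of a different FNE operator. Set $\gamma_{j}:=w_{\cdot j}/w\in(0,1]$ and $S_{j}':=(1-\gamma_{j})\operatorname*{Id}+\gamma_{j}S_{j}$ on $\mathcal{H}_{j}$. Direct expansion gives
\[
(S_{j}')_{\lambda w}(x_{j})=x_{j}+\lambda w\gamma_{j}(S_{j}(x_{j})-x_{j})=x_{j}+\lambda w_{\cdot j}(S_{j}(x_{j})-x_{j})=T^{j}(x_{j}),
\]
so $T^{j}=(S_{j}')_{\lambda w}$. To finish I lift to the product space by defining $U:\mathcal{H}\to\mathcal{H}$ componentwise via $U(x):=(S_{1}'(x_{1}),\ldots,S_{n}'(x_{n}))$, which satisfies $(U(x))_{j}=S_{j}'(x_{j})$ and exhibits $(T(x))_{j}$ as the $\lambda w$-relaxation of $(U(x))_{j}$.

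Everything then reduces to showing that each $S_{j}'$ is FNE, and this is where the one substantive point lies. The cleanest route is to combine the corollary to Fact \ref{f3} above (convex combinations of $j$-FNE operators are $j$-FNE, cf.\ \cite[Cor.\ 2.2.20]{cegielski-book}) with the observation that $S_{j}'$ is the convex combination $(1-\gamma_{j})\operatorname*{Id}+\gamma_{j}S_{j}$ of the two FNE operators $\operatorname*{Id}$ and $S_{j}$ with weights $1-\gamma_{j},\gamma_{j}\in[0,1]$. Equivalently, by Fact \ref{f3}(iii), $S_{j}$ FNE means $R_{j}:=2S_{j}-\operatorname*{Id}$ is NE, and then $2S_{j}'-\operatorname*{Id}=(1-\gamma_{j})\operatorname*{Id}+\gamma_{j}R_{j}$ is a convex combination of NE operators and hence NE, so $S_{j}'$ is FNE. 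The main obstacle is exactly this uniformization step --- verifying that relaxing an FNE operator by a factor in $[0,1]$ preserves FNE --- after which the rest of the argument is routine bookkeeping of relaxation constants on top of Fact \ref{f4}, yielding $(\lambda w,j)$-RFNE for every $j\in J$, i.e., $(\lambda w)$-CW-RFNE.
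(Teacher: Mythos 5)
Your proof is correct and takes essentially the same route as the paper: apply Fact \ref{f4} componentwise to get that $T^{j}$ is $(\lambda w_{\cdot j},j)$-RFNE, then pass to the common relaxation constant $\lambda w$ and lift back to the product space. The only difference is that you make explicit the uniformization step --- rewriting the $\lambda w_{\cdot j}$-relaxation of $S_{j}$ as the $\lambda w$-relaxation of the FNE operator $(1-w_{\cdot j}/w)\operatorname*{Id}+(w_{\cdot j}/w)S_{j}$ --- which the paper asserts in one line (``since $0\leq\lambda w_{\cdot j}\leq\lambda w\leq2$, $T^{j}$ is $\lambda w$-RFNE'') without detail; your verification of it is correct.
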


\begin{proof}
By Facts \ref{f2} and \ref{f4}, $T^{j}$ is $\lambda w_{\cdot j}$-RFNE, $j\in
J$. Since $0\leq\lambda w_{\cdot j}\leq\lambda w\leq2$, $T^{j}$ is $\lambda
w$-RFNE, $j\in J$. Noting that $T(x)=(T^{1}(x_{1}),T^{2}(x_{2}),...,T^{n}%
(x_{n}))$, tells us that $T$ is $\lambda w$-RFNE.
\end{proof}

\begin{example}%
\rm\
Consider a consistent system of linear equations $Ax=b$, where $A$ is an
$m\times n$ matrix with rows $a_{i}\in%
\mathbb{R}%
^{n}$ and $b\in%
\mathbb{R}%
^{m}$. Consider the hyperplanes $H_{i}:=\{x\in%
\mathbb{R}%
^{n}\mid\left\langle a_{i},x\right\rangle =b_{i}\}$, $i\in I$. A special case
of an operator $T$ defined by (\ref{e-Txj2}), occurs when $T_{i}:=P_{H_{i}}$,
the metric projection onto $H_{i},$ $w_{ij}:=w_{i}/s_{j}$, $i\in I$, $i\in J$,
with $w=(w_{1},w_{2},...,w_{m})\in\Delta_{m}$ and $s_{j}$ being the number of
nonzero elements in the $j$-th column of $A.$ This special case, introduced
and investigated in \cite[Equation (1.11)]{CEHN08}, is our motivating example,
discussed in Section \ref{sec:nonlin-drop} below.
\end{example}

\subsection{Componental cutter, quasi-nonexpansive, strongly
quasi-nonexpansive and strictly quasi-nonexpansive operators\label{sec:CW-QNE}%
}

Let $T:\mathcal{H}\rightarrow\mathcal{H}$ be defined by (\ref{e-T}). For $j\in
J$ we define the ``$j$-th fixed point set of $T$''\ by
\begin{equation}
\operatorname*{Fix}{}^{j}T:=\{z\in\mathcal{H}\mid(T(z))_{j}=z_{j}%
\}\text{.\label{eq:j-fix-point}}%
\end{equation}
Clearly, $\operatorname*{Fix}T=\bigcap_{j\in J}\operatorname*{Fix}{}^{j}T$.

Bearing in mind the well-known definitions of operators that are
\textit{cutter, quasi-nonexpansive} (QNE), \textit{strongly
quasi-nonexpansive} (SQNE), or \textit{strictly quasi-nonexpansive} (sQNE),
see, e.g., \cite{cegielski-book}, we introduce the following additional new
definitions of componental operators.

\begin{definition}
\label{d2}%
\rm\
Let $j\in J$ and let $T:\mathcal{H}\rightarrow\mathcal{H}$ be an operator with
$\operatorname*{Fix}{}^{j}T\neq\emptyset$. We say that $T$ is:

\begin{enumerate}
\item[(i)] a $j$\textit{-cutter}, if for all $x\in H$ and $z\in
\operatorname*{Fix}{}^{j}T$ it holds that
\begin{equation}
\langle x_{j}-(T(x))_{j},z_{j}-(T(x))_{j}\rangle_{j}\leq0\text{;}
\label{e-j-cut}%
\end{equation}

\item[(ii)] $j$\textit{-quasi-nonexpansive} ($j$-QNE), if for all $x\in H$ and
$z\in\operatorname*{Fix}{}^{j}T$ it holds that
\begin{equation}
\Vert(T(x))_{j}-z_{j}\Vert_{j}\leq\Vert x_{j}-z_{j}\Vert_{j}\text{;}
\label{e-j-QNE}%
\end{equation}

\item[(iii)] $j$\textit{-strongly quasi-nonexpansive} ($j$-SQNE), if there is
a constant $\rho_{j}>0$ such that for all $x\in H$ and $z\in
\operatorname*{Fix}{}^{j}T$ it holds that
\begin{equation}
\Vert(T(x))_{j}-z_{j}\Vert_{j}^{2}\leq\Vert x_{j}-z_{j}\Vert_{j}^{2}-\rho
_{j}\Vert(T(x))_{j}-x_{j}\Vert_{j}^{2}\text{;} \label{e-j-SQNE}%
\end{equation}

\item[(iv)] $j$\textit{-strictly quasi-nonexpansive} ($j$-sQNE), if for all
$x\notin\operatorname*{Fix}{}^{j}T$ and $z\in\operatorname*{Fix}{}^{j}T$ it
holds that
\begin{equation}
\Vert(T(x))_{j}-z_{j}\Vert_{j}<\Vert x_{j}-z_{j}\Vert_{j}\text{;}
\label{e-j-sQNE}%
\end{equation}

\item[(v)] a \textit{component-wise cutter} (CW-QNE, CW-SQNE, CW-sQNE ) if $T$
is $j$-cutter ($j$-QNE, $j$-SQNE, $j$-sQNE) for all $j\in J$.
\end{enumerate}

Alternatively, if we want to emphasize the constant $\rho_{j}$ in (iii) or the
vector $r:=(\rho_{1},\rho_{2},...,\rho_{n})$ in (v) explicitly, then we say
that $T$ is $(\rho_{j},j)$-SQNE or $T$ is $r$-CW-SQNE, respectively.
\end{definition}

Note that if $T$ is $(\rho_{j},j)$-SQNE and $\rho_{j}\geq\rho>0$ then $T$ is
$(\rho,j)$-SQNE. Thus, one can set the constant $\rho$ in (v), which does not
depend on $j$, e.g., $\rho:=\min_{j\in J}\rho_{j}$.

\begin{example}
\label{ex1}%
\rm\
Let $U^{j}:\mathcal{H}_{j}\rightarrow\mathcal{H}_{j}$, $j\in J$, and define
$U:\mathcal{H}\rightarrow\mathcal{H}$ by
\begin{equation}
U(x):=(U^{1}(x_{1}),U^{2}(x_{2}),...,U^{n}(x_{n}))\text{.} \label{e-Ux}%
\end{equation}
Clearly, $\operatorname*{Fix}U=\prod_{j=1}^{n}\operatorname*{Fix}U^{j}$. By
definition, $(U(x))_{j}=U^{j}(x_{j})$. Thus, it follows from Definitions
\ref{d1} and \ref{d2} that $U^{j}$ is NE (a cutter, QNE, SQNE), if and only if
$U$ is $j$-NE ($j$-cutter, $j$-QNE, $j$-SQNE), $j\in J$. Consequently, $U^{j}$
are NE (a cutter, QNE, SQNE) for all $j\in J$, if and only if $U$ is CW-NE
(CW-cutter, CW-QNE, CW-SQNE).
\end{example}

\begin{example}
\label{ex2}%
\rm\
Let $S_{j}:\mathcal{H}\rightarrow\mathcal{H}$, $j\in J$, and define
$S:\mathcal{H}\rightarrow\mathcal{H}$ by
\begin{equation}
S(x):=((S_{1}(x))_{1},(S_{2}(x))_{2},...,(S_{n}(x))_{n})\text{.}%
\end{equation}
Then $(S(x))_{j}=(S_{j}(x))_{j}$, $j\in J$. Consequently, $S$ is a CW-cutter
($r$-CW-SQNE, where $r:=(\rho_{1},\rho_{2},...,\rho_{n})$ with $\rho_{j}>0$,
$j\in J)$ if and only if $S_{j}$ is a $j$-cutter ($(\rho_{j},j)$-SQNE) for all
$j\in J$.\medskip
\end{example}

For $j\in J$ and $T:\mathcal{H}\rightarrow\mathcal{H}$ define
\begin{equation}
F^{j}(T):=\{z\in\mathcal{H}\mid\Vert(T(x))_{j}-z_{j}\Vert_{j}\leq\Vert
x_{j}-z_{j}\Vert_{j}\text{ for all }x\in\mathcal{H}\} \label{e-(F)j}%
\end{equation}
and
\begin{equation}
F_{j}(T):=\{z_{j}\in\mathcal{H}_{j}\mid\Vert(T(x))_{j}-z_{j}\Vert_{j}\leq\Vert
x_{j}-z_{j}\Vert_{j}\text{ for all }x\in\mathcal{H}\}\text{.} \label{e-Fj}%
\end{equation}
Clearly,
\begin{equation}
F_{j}(T)=\bigcap_{x\in\mathcal{H}}\{z_{j}\in\mathcal{H}_{j}\mid\Vert
(T(x))_{j}-z_{j}\Vert_{j}\leq\Vert x_{j}-z_{j}\Vert_{j}\}\text{,}
\label{e-Fj=00003D}%
\end{equation}
thus, $F_{j}$ is a closed convex subset as intersection of closed half-spaces.

For a $j$-QNE operator $T$, where $j\in J$, the property expressed in Fact
\ref{f1} is not true in general. Thus, contrary to an NE operator, a CW-QNE
operator cannot be decomposed. Nevertheless, the fixed point set of a CW-QNE
operator is a Cartesian product of some sets.\medskip

\begin{fact}
\label{f7}Let $j\in J$.

\begin{enumerate}
\item[$\mathrm{(i)}$] The following inclusion holds
\begin{equation}
F^{j}(T)\subseteq\operatorname*{Fix}{}^{j}T\text{.} \label{e-Fj-Fixj}%
\end{equation}

\item[$\mathrm{(ii)}$] If $T$ is $j$-QNE then the converse inclusion is also
true. Consequently, for a CW-QNE operator $T$ we have
\begin{equation}
\operatorname*{Fix}T=\bigcap_{j=1}^{n}\operatorname*{Fix}{}^{j}T=\bigcap
_{j=1}^{n}F^{j}(T)=\prod_{j=1}^{n}F_{j}(T)\text{.} \label{e-Fix-F}%
\end{equation}

\item[$\mathrm{(iii)}$] If $T:\mathcal{H}\rightarrow\mathcal{H}$ is a
CW-cutter (CW-QNE, CW-SQNE, CW-sQNE), then $T$ is a cutter (QNE, SQNE, sQNE).
\end{enumerate}
\end{fact}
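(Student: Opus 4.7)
The plan is to handle the three parts in the order stated, since each relies on the previous one.

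For part (i), I would simply plug the definitions together: if $z\in F^{j}(T)$, then the defining inequality of $F^{j}(T)$ applied with $x:=z$ gives $\|(T(z))_{j}-z_{j}\|_{j}\leq\|z_{j}-z_{j}\|_{j}=0$, so $(T(z))_{j}=z_{j}$, i.e., $z\in\operatorname*{Fix}{}^{j}T$.

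For part (ii), the reverse inclusion is immediate from the $j$-QNE inequality in Definition \ref{d2}(ii): for $z\in\operatorname*{Fix}{}^{j}T$, the inequality in (\ref{e-j-QNE}) holds for every $x\in\mathcal{H}$, which is exactly the membership condition for $F^{j}(T)$. The key observation for the product formula is that the defining condition for $z\in F^{j}(T)$ only involves the $j$-th coordinate $z_{j}$ of $z$; hence $F^{j}(T)=\mathcal{H}_{1}\times\cdots\times F_{j}(T)\times\cdots\times\mathcal{H}_{n}$, which immediately gives $\bigcap_{j=1}^{n}F^{j}(T)=\prod_{j=1}^{n}F_{j}(T)$. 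Combined with $\operatorname*{Fix}T=\bigcap_{j\in J}\operatorname*{Fix}{}^{j}T$ (already noted right after (\ref{eq:j-fix-point})) and the just-proved equality $\operatorname*{Fix}{}^{j}T=F^{j}(T)$, the chain (\ref{e-Fix-F}) follows.

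For part (iii), each assertion reduces to summing the componentwise inequalities over $j\in J$, using $\|x-y\|^{2}=\sum_{j=1}^{n}\|x_{j}-y_{j}\|_{j}^{2}$ and $\langle x,y\rangle=\sum_{j=1}^{n}\langle x_{j},y_{j}\rangle_{j}$. Pick any $z\in\operatorname*{Fix}T\subseteq\operatorname*{Fix}{}^{j}T$ for every $j$, so that the componental defining inequality applies for each $j$. Summing (\ref{e-j-cut}) over $j$ yields the cutter inequality for $T$; squaring and summing (\ref{e-j-QNE}) yields QNE for $T$; and summing (\ref{e-j-SQNE}) with the uniform constant $\rho:=\min_{j\in J}\rho_{j}>0$ (as noted in the paragraph after Definition \ref{d2}) yields $\rho$-SQNE for $T$.

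The slightly delicate case, and the only real obstacle, is CW-sQNE $\Rightarrow$ sQNE, because (\ref{e-j-sQNE}) gives a \emph{strict} inequality only when $x\notin\operatorname*{Fix}{}^{j}T$. Here I would argue as follows: if $x\notin\operatorname*{Fix}T$, then by $\operatorname*{Fix}T=\bigcap_{j\in J}\operatorname*{Fix}{}^{j}T$ there exists at least one index $j_{0}\in J$ with $x\notin\operatorname*{Fix}{}^{j_{0}}T$, and for that index (\ref{e-j-sQNE}) yields $\|(T(x))_{j_{0}}-z_{j_{0}}\|_{j_{0}}^{2}<\|x_{j_{0}}-z_{j_{0}}\|_{j_{0}}^{2}$; for every other index $j$, if $x\in\operatorname*{Fix}{}^{j}T$ then $(T(x))_{j}=x_{j}$ gives equality of the corresponding squared norms, and otherwise we again have strict inequality by $j$-sQNE. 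Summing the squared norms over $j$ therefore produces a strict inequality $\|T(x)-z\|^{2}<\|x-z\|^{2}$, establishing sQNE of $T$.
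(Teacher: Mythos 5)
Your proof is correct and follows essentially the same route as the paper: part (i) by taking $x=z$ in the definition of $F^{j}(T)$, part (ii) directly from the $j$-QNE inequality, and part (iii) by summing the componental inequalities using the product-space norm and inner product. You supply more detail than the paper (whose proof of (iii) is a one-line remark), in particular the careful handling of the strict inequality for the CW-sQNE case via an index $j_{0}$ with $x\notin\operatorname*{Fix}{}^{j_{0}}T$, which is a worthwhile elaboration but not a different method.
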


\begin{proof}
(i) If $F^{j}(T)=\emptyset$ then the inclusion in (\ref{e-Fj-Fixj}) is clear.
Let now $F^{j}(T)\neq\emptyset$ and $z\in F^{j}(T)$. If we take $x=z$ in
(\ref{e-(F)j}) then we obtain
\begin{equation}
\Vert(T(z))_{j}-z_{j}\Vert_{j}\leq\Vert z_{j}-z_{j}\Vert_{j}=0\text{,}%
\end{equation}
i.e., $z\in\operatorname*{Fix}{}^{j}T$.

(ii) Suppose that $T$ is $j$-QNE and let $z\in\operatorname*{Fix}{}^{j}T$.
Then, by definition, $z\in F^{j}(T)$. This together with (i) implies the
second equality in (\ref{e-Fix-F}) if $T$ is CW-QNE. The first and the last
equalities in (\ref{e-Fix-F}) are obvious.

(iii) Follows from the definition of the inner product in $\mathcal{H}$ and
from the definition of a CW-cutter (CW-QNE, CW-SQNE operator).
\end{proof}

\bigskip{}

If we consider a relaxation of an operator $T:\mathcal{H}\rightarrow
\mathcal{H}$ we can associate different relaxation parameters $\lambda_{j}$
with various components $(T(\cdot))_{j}$. Let $y:=(\lambda_{1},\lambda
_{2},...,\lambda_{n})\in%
\mathbb{R}
_{+}^{n}$. We say that the operator $T_{y},$ defined by,
\begin{equation}
(T_{y}(x))_{j}:=x_{j}+\lambda_{j}((T(x))_{j}-x_{j})\text{, }j\in J\text{,}%
\end{equation}
is the $y$\textit{-CW-relaxation} of $T$ or, in short, a
\textit{CW-relaxation} of $T$. Clearly, if $\lambda_{j}=\lambda$ for all $j\in
J$ then the notion of the $y$-CW-relaxation of $T$ coincides with the notion
of the $\lambda$-relaxation of $T$.\medskip

\begin{fact}
\label{f5}Let $T:\mathcal{H}\rightarrow\mathcal{H}$ with $\operatorname*{Fix}%
T\neq\emptyset$, let $y=(\lambda_{1},\lambda_{2},...,\lambda_{n})\in(0,2]^{n}%
$. If $T$ is a CW-cutter then its $y$-relaxation $T_{y}$ is $r$-CW-SQNE, where
$r:=(\rho_{1},\rho_{2},...,\rho_{n})$ with $\rho_{j}=(2-\lambda_{j}%
)/\lambda_{j}$, $j\in J$.
\end{fact}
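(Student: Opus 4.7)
The plan is to reduce the statement to the classical (non-componental) identity relating cutter operators and their relaxations, applied coordinate by coordinate. Because both the cutter property and the SQNE inequality in Definition \ref{d2} only involve the $j$-th components and the norm $\|\cdot\|_j$ on $\mathcal{H}_j$, the proof for each fixed $j$ is essentially a one-component calculation; the CW-statement then follows because $T$ is assumed $j$-cutter for every $j\in J$.

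First I would observe that $\operatorname*{Fix}{}^{j}T_{y}=\operatorname*{Fix}{}^{j}T$ whenever $\lambda_{j}>0$, since $(T_{y}(x))_{j}=x_{j}$ iff $\lambda_{j}((T(x))_{j}-x_{j})=0$ iff $(T(x))_{j}=x_{j}$. In particular $\operatorname*{Fix}{}^{j}T_{y}\neq\emptyset$, so the defining inequality of $(\rho_{j},j)$-SQNE is meaningful. Fix $j\in J$, $x\in\mathcal{H}$, and $z\in\operatorname*{Fix}{}^{j}T_{y}=\operatorname*{Fix}{}^{j}T$.

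Next, set $u_{j}:=(T(x))_{j}-x_{j}$, so that by definition of the $y$-CW-relaxation $(T_{y}(x))_{j}-x_{j}=\lambda_{j}u_{j}$, hence
\begin{equation}
\|(T_{y}(x))_{j}-z_{j}\|_{j}^{2}=\|x_{j}-z_{j}\|_{j}^{2}+2\lambda_{j}\langle x_{j}-z_{j},u_{j}\rangle_{j}+\lambda_{j}^{2}\|u_{j}\|_{j}^{2}.
\end{equation}
Using the $j$-cutter inequality \eqref{e-j-cut} with $z\in\operatorname*{Fix}{}^{j}T$, namely $\langle x_{j}-(T(x))_{j},z_{j}-(T(x))_{j}\rangle_{j}\leq0$, a short rearrangement (writing $x_{j}-(T(x))_{j}=-u_{j}$ and $z_{j}-(T(x))_{j}=(z_{j}-x_{j})-u_{j}$) yields $\langle x_{j}-z_{j},u_{j}\rangle_{j}\leq -\|u_{j}\|_{j}^{2}$. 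Plugging this into the previous display gives
\begin{equation}
\|(T_{y}(x))_{j}-z_{j}\|_{j}^{2}\leq\|x_{j}-z_{j}\|_{j}^{2}-\lambda_{j}(2-\lambda_{j})\|u_{j}\|_{j}^{2}.
\end{equation}

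Finally, I would rewrite $\|u_{j}\|_{j}^{2}$ in terms of $T_{y}$ via $\|(T_{y}(x))_{j}-x_{j}\|_{j}^{2}=\lambda_{j}^{2}\|u_{j}\|_{j}^{2}$ (permissible because $\lambda_{j}>0$), which converts the bound to
\begin{equation}
\|(T_{y}(x))_{j}-z_{j}\|_{j}^{2}\leq\|x_{j}-z_{j}\|_{j}^{2}-\frac{2-\lambda_{j}}{\lambda_{j}}\|(T_{y}(x))_{j}-x_{j}\|_{j}^{2},
\end{equation}
matching \eqref{e-j-SQNE} with $\rho_{j}=(2-\lambda_{j})/\lambda_{j}\geq 0$. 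Since $T$ is $j$-cutter for every $j\in J$, this holds for all $j$, which is exactly the $r$-CW-SQNE property with $r=(\rho_{1},\ldots,\rho_{n})$. There is no real obstacle here; the only point to be careful about is the identification $\operatorname*{Fix}{}^{j}T_{y}=\operatorname*{Fix}{}^{j}T$, which is what makes the cutter inequality available for the correct set of $z$'s, and handling the endpoint $\lambda_{j}=2$ (where $\rho_{j}=0$ and the inequality degenerates but still holds).
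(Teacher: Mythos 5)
Your proof is correct and follows essentially the same route as the paper: expand $\Vert(T_y(x))_j-z_j\Vert_j^2$, apply the $j$-cutter inequality, and rewrite $\lambda_j(2-\lambda_j)\Vert(T(x))_j-x_j\Vert_j^2$ as $\tfrac{2-\lambda_j}{\lambda_j}\Vert(T_y(x))_j-x_j\Vert_j^2$. Your explicit observation that $\operatorname*{Fix}{}^{j}T_y=\operatorname*{Fix}{}^{j}T$ for $\lambda_j>0$ is a small but welcome refinement, since the paper's proof only quantifies over $z\in\operatorname*{Fix}T$ while Definition \ref{d2}(iii) requires the inequality for all $z\in\operatorname*{Fix}{}^{j}T_y$.
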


\begin{proof}
Suppose that $T$ is a CW-cutter and let $z\in\operatorname*{Fix}T$. For any
$j\in J$ we have
\begin{align}
\Vert(T_{y}(x))_{j}-z_{j}\Vert_{j}^{2}  &  =\Vert x_{j}+\lambda_{j}%
((T(x))_{j}-x_{j})-z_{j}\Vert_{j}^{2}\nonumber\\
&  =\Vert x_{j}-z_{j}\Vert_{j}^{2}+\lambda_{j}^{2}\Vert(T(x))_{j}-x_{j}%
\Vert_{j}^{2}\nonumber\\
&  +2\lambda_{j}\langle x_{j}-(T(x))_{j},z_{j}-(T(x))_{j}\rangle_{j}%
-2\lambda_{j}\Vert(T(x))_{j}-x_{j}\Vert_{j}^{2}\nonumber\\
&  \leq\Vert x_{j}-z_{j}\Vert_{j}^{2}-\lambda_{j}(2-\lambda_{j})\Vert
(T(x))_{j}-x_{j}\Vert_{j}^{2}\nonumber\\
&  =\Vert x_{j}-z_{j}\Vert_{j}^{2}-\frac{2-\lambda_{j}}{\lambda_{j}}%
\Vert(T_{\lambda}(x))_{j}-x_{j}\Vert_{j}^{2}\text{,}%
\end{align}
which means that $T_{y}$ is $r$-CW-SQNE.
\end{proof}

\begin{fact}
\label{f6}Let $j\in J$. A $j$-NE operator having a fixed point is $j$-QNE.
Consequently, a CW-NE operator having a fixed point is CW-QNE.
\end{fact}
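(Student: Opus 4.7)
The plan is essentially to specialize the $j$-nonexpansivity inequality by choosing the second argument to be a fixed point, and then observe that the $j$-QNE inequality falls out immediately. First I would fix an arbitrary $z \in \operatorname{Fix}{}^j T$ (which is nonempty by hypothesis, since any fixed point of $T$ automatically lies in $\operatorname{Fix}{}^j T$ because $T(z)=z$ implies $(T(z))_j = z_j$). Then, for arbitrary $x \in \mathcal{H}$, I apply the $j$-NE defining inequality \eqref{e-j-NE} with $y := z$, obtaining
\begin{equation}
\Vert (T(x))_j - (T(z))_j \Vert_j \leq \Vert x_j - z_j \Vert_j.
\end{equation}

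The key observation is that $z \in \operatorname{Fix}{}^j T$ means precisely $(T(z))_j = z_j$ by the definition \eqref{eq:j-fix-point}. Substituting this into the previous inequality yields
\begin{equation}
\Vert (T(x))_j - z_j \Vert_j \leq \Vert x_j - z_j \Vert_j,
\end{equation}
which is exactly the $j$-QNE inequality \eqref{e-j-QNE}. This proves the first assertion. For the second assertion, if $T$ is CW-NE and has a fixed point $z \in \operatorname{Fix} T$, then $T$ is $j$-NE for every $j \in J$ and, as noted above, $z \in \operatorname{Fix}{}^j T$ for every $j$, so $\operatorname{Fix}{}^j T \neq \emptyset$ for all $j$. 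Applying the first part componentwise gives that $T$ is $j$-QNE for every $j \in J$, i.e., CW-QNE.

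There is essentially no obstacle here; the entire content of the fact is the one-line substitution $y = z$ in the $j$-NE inequality together with the remark that the $j$-th componental equation at a fixed point collapses the right-hand operator value. The only place one must be slightly careful is the quantifier on $z$: the $j$-QNE definition ranges over all $z \in \operatorname{Fix}{}^j T$, not merely a single full fixed point of $T$, so I would emphasize that the argument above was carried out for an arbitrary such $z$, which makes the bound hold uniformly over the $j$-th fixed point set as required.
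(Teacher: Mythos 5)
Your proof is correct and follows essentially the same route as the paper: fix an arbitrary $z\in\operatorname*{Fix}{}^{j}T$, apply the $j$-NE inequality with $y=z$, and use $(T(z))_{j}=z_{j}$ to obtain the $j$-QNE inequality. Your additional remark that a fixed point of $T$ guarantees $\operatorname*{Fix}{}^{j}T\neq\emptyset$ for every $j$ is a careful touch that the paper leaves implicit.
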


\begin{proof}
Let $T$ be $j$-NE and let $z\in\operatorname*{Fix}{}^{j}T$. Then
$(T(z))_{j}=z_{j}$ and, for all $x\in\mathcal{H},$ we have
\begin{equation}
\Vert(T(x))_{j}-z_{j}\Vert_{j}=\Vert(T(x))_{j}-(T(z))_{j}\Vert_{j}\leq\Vert
x_{j}-z_{j}\Vert_{j}\text{.}%
\end{equation}
This means that $T$ is $j$-QNE.
\end{proof}

\begin{fact}
\label{f-8a}Let $j\in J$, let $T_{i}:\mathcal{H}\rightarrow\mathcal{H}$ be
$j$-sQNE, $i\in I$, with $\bigcap_{i\in I}F^{j}(T_{i})\neq\emptyset$, and
$T:=\sum_{i=1}^{m}w_{i}T_{i}$, where $w\in\operatorname*{ri}\Delta_{m}$
($\operatorname*{ri}$ is the relative interior). Then
\begin{equation}
F^{j}(T)=\bigcap_{i\in I}F^{j}(T_{i}) \label{e-FjTi}%
\end{equation}
and $T$ is $j$-sQNE.
\end{fact}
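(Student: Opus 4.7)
The plan is to split into three pieces: the easy inclusion $\bigcap_{i}F^{j}(T_{i})\subseteq F^{j}(T)$, the reverse inclusion, and the $j$-sQNE conclusion. Throughout I will rely on the observation that $j$-sQNE implies $j$-QNE (for $x\in\operatorname*{Fix}{}^{j}T$ the non-strict inequality is automatic), which by Fact \ref{f7}(ii) gives $F^{j}(T_{i})=\operatorname*{Fix}{}^{j}T_{i}$ for every $i\in I$.

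For the easy inclusion I would fix $z\in\bigcap_{i}F^{j}(T_{i})$ and any $x\in\mathcal{H}$. Writing $(T(x))_{j}-z_{j}=\sum_{i=1}^{m}w_{i}\bigl((T_{i}(x))_{j}-z_{j}\bigr)$ (using $\sum w_{i}=1$) and applying the triangle inequality together with the $j$-QNE inequality coordinate-wise gives $\|(T(x))_{j}-z_{j}\|_{j}\leq\|x_{j}-z_{j}\|_{j}$, hence $z\in F^{j}(T)$.

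The reverse inclusion is the main obstacle and is where strictness and $w\in\operatorname*{ri}\Delta_{m}$ are needed. I would argue by contradiction. Fix a reference point $z^{*}\in\bigcap_{i}F^{j}(T_{i})$, which exists by hypothesis. Take $z\in F^{j}(T)$ and suppose there is some $i_{0}\in I$ with $z\notin F^{j}(T_{i_{0}})=\operatorname*{Fix}{}^{j}T_{i_{0}}$. By $j$-sQNE of $T_{i_{0}}$ applied to $z$ and $z^{*}$, $\|(T_{i_{0}}(z))_{j}-z^{*}_{j}\|_{j}^{2}<\|z_{j}-z^{*}_{j}\|_{j}^{2}$, while for every other $i$ the $j$-QNE inequality gives $\|(T_{i}(z))_{j}-z^{*}_{j}\|_{j}^{2}\leq\|z_{j}-z^{*}_{j}\|_{j}^{2}$. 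Since $w_{i_{0}}>0$, convexity of $\|\cdot\|_{j}^{2}$ yields the strict estimate
\begin{equation}
\|(T(z))_{j}-z^{*}_{j}\|_{j}^{2}\leq\sum_{i=1}^{m}w_{i}\|(T_{i}(z))_{j}-z^{*}_{j}\|_{j}^{2}<\|z_{j}-z^{*}_{j}\|_{j}^{2}.
\end{equation}
But $z\in F^{j}(T)\subseteq\operatorname*{Fix}{}^{j}T$ by Fact \ref{f7}(i), so $(T(z))_{j}=z_{j}$ and the left-hand side equals $\|z_{j}-z^{*}_{j}\|_{j}^{2}$, a contradiction. Therefore $z\in F^{j}(T_{i})$ for every $i$, and \eqref{e-FjTi} is established.

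For the $j$-sQNE claim, fix $x\notin\operatorname*{Fix}{}^{j}T$ and $z\in\operatorname*{Fix}{}^{j}T=F^{j}(T)=\bigcap_{i}F^{j}(T_{i})$. Because $\sum_{i}w_{i}((T_{i}(x))_{j}-x_{j})=(T(x))_{j}-x_{j}\neq 0$, there exists $i_{0}$ with $(T_{i_{0}}(x))_{j}\neq x_{j}$, i.e., $x\notin\operatorname*{Fix}{}^{j}T_{i_{0}}$. The $j$-sQNE of $T_{i_{0}}$ contributes a strict inequality at that index, the $j$-QNE estimates suffice at the others, and $w_{i_{0}}>0$ carries the strictness into the convex combination, giving $\|(T(x))_{j}-z_{j}\|_{j}<\|x_{j}-z_{j}\|_{j}$. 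The only delicate point throughout is ensuring strictness propagates through the convex combination, which is exactly where $w\in\operatorname*{ri}\Delta_{m}$ is used.
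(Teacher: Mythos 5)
Your proof is correct and follows essentially the same route as the paper's: the easy inclusion via convexity and the $j$-QNE estimates, and the hard inclusion by exploiting strictness at one index with $w_{i_0}>0$ together with $F^{j}(T)\subseteq\operatorname*{Fix}{}^{j}T$ to force a contradiction. The only differences are cosmetic (you argue by contradiction from $z\in F^{j}(T)$ where the paper argues the contrapositive, and you use convexity of $\Vert\cdot\Vert_{j}^{2}$ where the paper uses the triangle inequality); you are in fact slightly more explicit than the paper about deducing the $j$-sQNE property of $T$ at the end.
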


\begin{proof}
To prove the inclusion $\supseteq$ in (\ref{e-FjTi}) , let $z\in\bigcap_{i\in
I}F^{j}(T_{i})$ and $x\in\mathcal{H}$ be arbitrary. The convexity of the norm
and the assumption that $T_{i}$ are $j$-QNE, $i\in I$, yield
\begin{equation}
\Vert(T(x))_{j}-z_{j}\Vert_{j}\leq\sum_{i=1}^{m}w_{i}\Vert(T_{i}(x))_{j}%
-z_{j}\Vert_{j}\leq\sum_{i=1}^{m}w_{i}\Vert x_{j}-z_{j}\Vert_{j}=\Vert
x_{j}-z_{j}\Vert_{j}\text{,}%
\end{equation}
which shows that $z\in F^{j}(T)$.

To prove the inclusion $\subseteq$ in (\ref{e-FjTi}) we observe that the
inclusion is clear if $\bigcap_{i\in I}F^{j}(T_{i})=\mathcal{H}$. Suppose the
opposite and let $x\notin\bigcap_{i\in I}F^{j}(T_{i})$ and $z\in\bigcap_{i\in
I}F^{j}(T_{i})$. The convexity of the norm and the assumption that $T_{i}$ are
$j$-sQNE, $i\in I$, yield
\begin{equation}
\Vert(T(x))_{j}-z_{j}\Vert_{j}\leq\sum_{i=1}^{m}w_{i}\Vert(T_{i}(x))_{j}%
-z_{j}\Vert_{j}<\sum_{i=1}^{m}w_{i}\Vert x_{j}-z_{j}\Vert_{j}=\Vert
x_{j}-z_{j}\Vert_{j}\text{,}\label{e-Txj-zj}%
\end{equation}
because $\Vert(T_{i_{0}}(x))_{j}-z_{j}\Vert_{j}<\Vert x_{j}-z_{j}\Vert_{j}$
for some $i_{0}$, and $w_{i_{0}}>0$. Now it is clear that $x\notin
\operatorname*{Fix}{}^{j}T=F^{j}(T)$, because otherwise,
\begin{equation}
\Vert x_{j}-z_{j}\Vert_{j}=\Vert(T(x))_{j}-z_{j}\Vert_{j}<\Vert x_{j}%
-z_{j}\Vert_{j}%
\end{equation}
which would lead to a contradiction.\medskip
\end{proof}

\begin{fact}
\label{f8}Let $j\in J$, $T_{i}:\mathcal{H}\rightarrow\mathcal{H}$ be
$j$-cutters having a common fixed point, $i\in I$ and $w\in\operatorname*{ri}%
\Delta_{m}$. Then the operator $T:\mathcal{H}\rightarrow\mathcal{H}$ defined
by $T(x):=\sum_{i=1}^{m}w_{i}T_{i}(x)$ is a $j$-cutter.
\end{fact}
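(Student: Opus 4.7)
The plan has two main steps: first I would identify $\operatorname*{Fix}{}^{j}T$ explicitly, and then I would verify the $j$-cutter inequality for $T$ by averaging the cutter inequalities of the $T_{i}$.

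For the first step, I would observe that every $j$-cutter is automatically $j$-sQNE. Indeed, expanding
\[
\Vert(T_{i}(x))_{j}-z_{j}\Vert_{j}^{2}=\Vert x_{j}-z_{j}\Vert_{j}^{2}-\Vert x_{j}-(T_{i}(x))_{j}\Vert_{j}^{2}+2\langle x_{j}-(T_{i}(x))_{j},z_{j}-(T_{i}(x))_{j}\rangle_{j},
\]
combined with (\ref{e-j-cut}), gives the $(1,j)$-SQNE inequality, and since $(T_{i}(x))_{j}\neq x_{j}$ whenever $x\notin\operatorname*{Fix}{}^{j}T_{i}$, the inequality is strict, so $T_{i}$ is $j$-sQNE. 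The common-fixed-point hypothesis gives $\bigcap_{i\in I}\operatorname*{Fix}{}^{j}T_{i}\neq\emptyset$, so Fact \ref{f-8a} applies and yields $F^{j}(T)=\bigcap_{i\in I}F^{j}(T_{i})$; by Fact \ref{f7}(ii) this equals $\bigcap_{i\in I}\operatorname*{Fix}{}^{j}T_{i}$, and since $T$ is itself $j$-sQNE (hence $j$-QNE) also equals $\operatorname*{Fix}{}^{j}T$. Consequently every $z\in\operatorname*{Fix}{}^{j}T$ is a common $j$-fixed point of all the $T_{i}$, so the cutter inequality of each $T_{i}$ is available at this $z$.

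For the second step, I would use the standard reformulation of the $j$-cutter property: writing $x_{j}-(T_{i}(x))_{j}=(x_{j}-z_{j})-((T_{i}(x))_{j}-z_{j})$ in (\ref{e-j-cut}) shows that $T_{i}$ being a $j$-cutter is equivalent to
\[
\langle x_{j}-z_{j},(T_{i}(x))_{j}-z_{j}\rangle_{j}\geq\Vert(T_{i}(x))_{j}-z_{j}\Vert_{j}^{2}.
\]
Multiplying by $w_{i}$, summing over $i\in I$, and using linearity of $\langle\cdot,\cdot\rangle_{j}$ together with $(T(x))_{j}=\sum_{i}w_{i}(T_{i}(x))_{j}$ gives
\[
\langle x_{j}-z_{j},(T(x))_{j}-z_{j}\rangle_{j}\geq\sum_{i=1}^{m}w_{i}\Vert(T_{i}(x))_{j}-z_{j}\Vert_{j}^{2}\geq\Vert(T(x))_{j}-z_{j}\Vert_{j}^{2},
\]
where the last inequality is convexity of $\Vert\cdot\Vert_{j}^{2}$ (Jensen). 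Reversing the reformulation yields the $j$-cutter inequality for $T$.

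The main obstacle is really only the preliminary identification of $\operatorname*{Fix}{}^{j}T$ as a set of common $j$-fixed points of the $T_{i}$: without that, the individual cutter inequalities cannot be invoked at the given $z$. Once this is secured by combining Facts \ref{f-8a} and \ref{f7}, the rest is a one-line weighted average followed by a single use of convexity of the squared norm, and no condition on $w$ beyond $w\in\operatorname*{ri}\Delta_{m}$ is needed.
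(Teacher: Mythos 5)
Your proof is correct and is essentially the argument the paper defers to: the paper's proof is only the citation ``Similar to \cite[Corollary 2.1.49]{cegielski-book}'', and your write-up is precisely that standard argument transplanted to the componental setting. You correctly isolate the one delicate point -- showing $\operatorname*{Fix}{}^{j}T=\bigcap_{i\in I}\operatorname*{Fix}{}^{j}T_{i}$, which is exactly where $w\in\operatorname*{ri}\Delta_{m}$ is needed and which you obtain from Facts \ref{f-8a} and \ref{f7} after noting that a $j$-cutter is $(1,j)$-SQNE, hence $j$-sQNE -- and the remaining step (weighted average of the reformulated cutter inequalities plus convexity of $\Vert\cdot\Vert_{j}^{2}$) is routine and carried out correctly.
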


\begin{proof}
Similar to \cite[Corollary 2.1.49]{cegielski-book}.
\end{proof}

\begin{corollary}
Let $T_{i}:\mathcal{H}\rightarrow\mathcal{H}$ be CW-cutters having a common
fixed point, $i\in I$, and $w\in\Delta_{m}$. Then the operator $T:\mathcal{H}%
\rightarrow\mathcal{H}$ defined by $T(x)=\sum_{i=1}^{m}w_{i}T_{i}(x)$ is a
CW-cutter. If, moreover, $w_{i}>0$, $i\in I$, then $\operatorname*{Fix}%
T=\bigcap_{i\in I}\operatorname*{Fix}T_{i}$.\medskip
\end{corollary}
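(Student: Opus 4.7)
The plan is to deduce both parts from Fact \ref{f8} (applied componentally) together with a short direct argument using the cutter inequality for the reverse inclusion of fixed-point sets.

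For the first part, I would fix an arbitrary $j\in J$ and show $T$ is a $j$-cutter. Since each $T_{i}$ is CW-cutter it is in particular $j$-cutter, and the assumed common fixed point $z^{\ast}\in\bigcap_{i\in I}\operatorname*{Fix}T_{i}$ belongs to $\operatorname*{Fix}{}^{j}T_{i}$ for every $i$, so the ``common $j$-fixed point'' hypothesis of Fact \ref{f8} is satisfied. Fact \ref{f8} requires $w\in\operatorname*{ri}\Delta_{m}$, whereas the corollary only assumes $w\in\Delta_{m}$; to bridge this I would restrict the sum to $I^{\prime}:=\{i\in I\mid w_{i}>0\}$, renormalize, and apply Fact \ref{f8} to the resulting convex combination, which equals $T$. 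Since $j$ was arbitrary, $T$ is CW-cutter.

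For the second part, suppose $w_{i}>0$ for all $i\in I$. The inclusion $\bigcap_{i\in I}\operatorname*{Fix}T_{i}\subseteq\operatorname*{Fix}T$ is immediate by linearity. For the reverse inclusion, let $z\in\operatorname*{Fix}T$ and pick any $z^{\ast}\in\bigcap_{i\in I}\operatorname*{Fix}T_{i}$. By Fact \ref{f7}(iii) each $T_{i}$ is a cutter on the whole product space $\mathcal{H}$, so
\begin{equation}
\langle z-T_{i}(z),z^{\ast}-T_{i}(z)\rangle\leq0,\quad i\in I.
\end{equation}
The idea is to use the polarization-type identity
\begin{equation}
\langle z-T_{i}(z),z^{\ast}-T_{i}(z)\rangle=\Vert z-T_{i}(z)\Vert^{2}+\langle z-T_{i}(z),z^{\ast}-z\rangle
\end{equation}
to rewrite this as $\Vert z-T_{i}(z)\Vert^{2}\leq\langle T_{i}(z)-z,z^{\ast}-z\rangle$, and then take the convex combination with the strictly positive weights $w_{i}$; the right-hand side collapses to $\langle T(z)-z,z^{\ast}-z\rangle=0$ because $T(z)=z$, forcing $T_{i}(z)=z$ for every $i$, i.e., $z\in\bigcap_{i\in I}\operatorname*{Fix}T_{i}$.

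The main obstacle is minor and occurs in the second part: bridging the componental CW-cutter hypothesis on each $T_{i}$ to the classical scalar cutter inequality on $\mathcal{H}$, which is precisely the content of Fact \ref{f7}(iii). Once this bridge is in place, the rest is an invocation of Fact \ref{f8} (modulo zero weights) plus the one-line weighted-sum identity above.
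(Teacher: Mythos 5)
Your proposal is correct. The paper gives no written proof of this corollary --- it is presented as an immediate consequence of Fact \ref{f8} (whose own proof is deferred to Corollary 2.1.49 of Cegielski's book, which also contains the fixed-point-set identity) --- and your argument fills in exactly what is being left to the reader. Two points are worth highlighting as genuine value added: first, you correctly notice and repair the mismatch between the hypothesis $w\in\Delta_{m}$ in the corollary and $w\in\operatorname*{ri}\Delta_{m}$ in Fact \ref{f8} by passing to the support $I'=\{i\mid w_{i}>0\}$ and renormalizing (the subfamily still has a common fixed point, so Fact \ref{f8} applies); second, your proof of $\operatorname*{Fix}T\subseteq\bigcap_{i\in I}\operatorname*{Fix}T_{i}$ --- expanding the cutter inequality $\langle z-T_{i}(z),z^{\ast}-T_{i}(z)\rangle\leq0$ into $\Vert z-T_{i}(z)\Vert^{2}\leq\langle T_{i}(z)-z,z^{\ast}-z\rangle$ and summing with strictly positive weights so that the right-hand side vanishes at a fixed point of $T$ --- is the standard argument and is carried out correctly. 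One could equally run that last step componentwise with the $j$-cutter inequalities and $z^{\ast}\in\operatorname*{Fix}{}^{j}T_{i}$, avoiding the appeal to Fact \ref{f7}(iii), but the route through the full-space cutter property is equally valid.
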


\begin{fact}
\label{f9}Let $w_{ij}\geq0$, $i\in I$, $w_{\cdot j}:=\sum_{i=1}^{m}w_{ij}>0$,
$j\in J$. If $T_{i}:\mathcal{H}\rightarrow\mathcal{H}$, $i\in I$, are
CW-cutters having a common fixed point then the operator $T:\mathcal{H}%
\rightarrow\mathcal{H},$ defined by
\begin{equation}
(T(x))_{j}:=x_{j}+\lambda_{j}\sum_{i=1}^{m}w_{ij}((T_{i}(x))_{j}%
-x_{j})\text{,} \label{eq:nonlin-drop}%
\end{equation}
$j\in J$, where $\lambda_{j}\in(0,2/w_{\cdot j})$, is $r$-CW-SQNE, where
$r:=(\rho_{1},\rho_{2},...,\rho_{n})$ with $\rho_{j}=(2-\lambda_{j}%
)/\lambda_{j},j\in J$.
\end{fact}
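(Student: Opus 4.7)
The plan is to rewrite $T$ as a CW-relaxation of an auxiliary CW-cutter built from the $T_i$, and then invoke Fact \ref{f5}, which already converts a CW-relaxed CW-cutter into an $r$-CW-SQNE operator. For each $j\in J$ set $v_{ij}:=w_{ij}/w_{\cdot j}$ so that $(v_{ij})_{i\in I}$ is a convex combination supported on $I_{j}:=\{i\in I\mid w_{ij}>0\}$, and observe that
\begin{equation}
(T(x))_{j}=x_{j}+\lambda_{j}w_{\cdot j}\Bigl(\textstyle\sum_{i\in I_{j}}v_{ij}(T_{i}(x))_{j}-x_{j}\Bigr),
\end{equation}
so $(T(\cdot))_{j}$ is the $(\lambda_{j}w_{\cdot j})$-relaxation of the $j$-th component of a convex combination of the $T_{i}$'s.

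First I would define $S_{j}:\mathcal{H}\rightarrow\mathcal{H}$ by $S_{j}(x):=\sum_{i\in I_{j}}v_{ij}T_{i}(x)$. Each $T_{i}$ with $i\in I_{j}$ is, in particular, a $j$-cutter, and by assumption the $T_{i}$'s share a common fixed point, hence a common point in $\operatorname*{Fix}{}^{j}T_{i}$. Since the renormalized weights $(v_{ij})_{i\in I_{j}}$ lie in $\operatorname*{ri}\Delta_{|I_{j}|}$, Fact \ref{f8} applies and tells us that $S_{j}$ is a $j$-cutter. I would then assemble an auxiliary operator $S:\mathcal{H}\rightarrow\mathcal{H}$ componentally by $(S(x))_{j}:=(S_{j}(x))_{j}$, $j\in J$, and use Example \ref{ex2} to conclude that $S$ is a CW-cutter. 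Any common fixed point $z$ of the $T_{i}$'s satisfies $(S_{j}(z))_{j}=\sum_{i\in I_{j}}v_{ij}z_{j}=z_{j}$, so $z\in\operatorname*{Fix}S$ and Fact \ref{f5} is applicable to $S$.

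Finally, the displayed rewriting exhibits $T$ as the $y$-CW-relaxation of $S$ with $y:=(\lambda_{1}w_{\cdot 1},\ldots,\lambda_{n}w_{\cdot n})$, and the hypothesis $\lambda_{j}\in(0,2/w_{\cdot j})$ ensures $y\in(0,2)^{n}$. Fact \ref{f5} applied to $S$ with this $y$ delivers that $T$ is $r$-CW-SQNE with $\rho_{j}=(2-\lambda_{j}w_{\cdot j})/(\lambda_{j}w_{\cdot j})$, $j\in J$, as required. The only delicate bookkeeping is the restriction to $I_{j}$ so that the renormalized weights sit in the relative interior of a simplex (which is what Fact \ref{f8} demands); apart from that, the effective relaxation parameter entering Fact \ref{f5} must be identified as the product $\lambda_{j}w_{\cdot j}$ rather than $\lambda_{j}$ alone, and this is exactly what the displayed factorization makes transparent.
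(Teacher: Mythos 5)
Your proof is correct and follows essentially the same route as the paper's: define $S_{j}:=\sum_{i}(w_{ij}/w_{\cdot j})T_{i}$, assemble the CW-cutter $S$ componentally via Example \ref{ex2}, and apply Fact \ref{f5} to the $y$-CW-relaxation with $y_{j}=\lambda_{j}w_{\cdot j}$. Your restriction of the convex combination to $I_{j}=\{i\mid w_{ij}>0\}$ is a small but genuine refinement, since Fact \ref{f8} asks for weights in $\operatorname*{ri}\Delta_{m}$ and some $w_{ij}$ may vanish; the paper applies Fact \ref{f8} without this adjustment. One caveat: the constant you derive, $\rho_{j}=(2-\lambda_{j}w_{\cdot j})/(\lambda_{j}w_{\cdot j})$, is indeed what Fact \ref{f5} delivers for the relaxation parameter $\lambda_{j}w_{\cdot j}$, but it agrees with the $\rho_{j}=(2-\lambda_{j})/\lambda_{j}$ asserted in the statement (and repeated verbatim in the paper's proof) only when $w_{\cdot j}=1$; so your closing ``as required'' is not literally accurate --- the discrepancy lies in the statement itself, and it is harmless in Theorem \ref{thm:final}, where $\sum_{i}w_{ij}=1$.
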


\begin{proof}
Define an operator $S_{j}:\mathcal{H}\rightarrow\mathcal{H}$ by
\begin{equation}
S_{j}(x):=\sum_{i=1}^{m}\frac{w_{ij}}{w_{\cdot j}}T_{i}(x)\text{, }j\in J.
\end{equation}
By Fact \ref{f8}, $S_{j}$ is a $j$-cutter, $j\in J$. Define, for
$x\in\mathcal{H}$,
\begin{equation}
S(x)=((S_{1}(x))_{1},(S_{2}(x))_{2},...,(S_{n}(x))_{n})\text{.}%
\end{equation}
The operator $S:\mathcal{H}\rightarrow\mathcal{H}$ is a CW-cutter (see Example
\ref{ex2}). We have
\begin{equation}
(S(x))_{j}=(S_{j}(x))_{j}=\sum_{i=1}^{m}\frac{w_{ij}}{w_{\cdot j}}%
(T_{i}(x))_{j}\text{,}%
\end{equation}
and
\begin{equation}
(T(x))_{j}=x_{j}+\lambda w_{\cdot j}((S(x))_{j}-x_{j})\text{.} \label{e-Txj3}%
\end{equation}
Denote $y=(\lambda w_{\cdot1},\lambda w_{\cdot2},...,\lambda w_{\cdot n})$.
Equality (\ref{e-Txj3}) means that $T=S_{y}$, i.e., $T$ is a $y$-CW-relaxation
of a CW-cutter $S$. Thus, Fact \ref{f5} yields that the operator $T$ is
$r$-CW-SQNE, where $r:=(\rho_{1},\rho_{2},...,\rho_{n})$ with $\rho
_{j}=(2-\lambda_{j})/\lambda_{j},j\in J$.
\end{proof}

\subsection{Componental contractions\label{subsec:contract}}

\begin{definition}%
\rm\
An operator $T:\mathcal{H}\rightarrow\mathcal{H}$ is a $j$%
\textit{-contraction} ($j$-CONT) if, for some $j\in J,$
\begin{equation}
\Vert(T(x))_{j}-(T(y))_{j}\Vert_{j}\leq\alpha_{j}\Vert x_{j}-y_{j}\Vert
_{j},\text{ for all }x,y\in\mathcal{H},
\end{equation}
with an $\alpha_{j}\in\lbrack0,1)$. Alternatively, if we want to emphasize the
constant $\alpha_{j}$ explicitly, then we say that $T$ is an $(\alpha_{j},j)$-contraction.
\end{definition}

If $T$ is an $(\alpha_{j},j)$-contraction for all $j\in J$ then it is an
$\alpha$-contraction with $\alpha:=\max_{1\leq j\leq n}\alpha_{j}.$ That the
opposite is not true follows from the counter example $T(x_{1},x_{2}):=\left(
{\textstyle \frac{1}{2}x_{2},{\textstyle \frac{1}{2}x_{1}}}\right)  $ which is
an $\alpha$-contraction, with, e.g., $\alpha=0.5$, since
\begin{equation}
\left\Vert T(x)-T(y)\right\Vert =\frac{1}{2}\left\Vert x-y\right\Vert .
\end{equation}
However, for the points $(0,0)$ and $(1,5)$ there does not exist a real number
$\alpha_{1}\in\lbrack0,1)$ with which $T$ is $(\alpha_{1},1)$-contractive.

Following the Banach fixed point theorem, as reformulated in Berinde's book
\cite[Theorem 2.1]{berinde-book}, we formulate a componental contraction
mapping principle as follows.

\begin{theorem}
\label{thm:comp-contract}Let $j\in J$ and let $T:\mathcal{H}\rightarrow
\mathcal{H}$ be an $(\alpha_{j},j)$-contraction. Then

\begin{enumerate}
\item[$\mathrm{(i)}$] The $j$-th componental fixed point set of $T$ is
nonempty, i.e., $\operatorname*{Fix}{}^{j}T:=\{x\in\mathcal{H}\mid
(T(x))_{j}=x_{j}\}\neq\emptyset$ and the variable $x_{j}$ is unique for all
$x\in\operatorname*{Fix}{}^{j}T,$ henceforth denoted as $x_{j}=x_{j\ast}$.

\item[$\mathrm{(ii)}$] The sequence $\{x_{j}^{k}\}_{k=0}^{\infty}$ of the
$j$-th components of any sequence $\{x^{k}\}_{k=0}^{\infty},$ generated by the
Picard iteration $x^{k+1}=T(x^{k})$ associated with $T,$ converges for any
initial point $x^{0}\in\mathcal{H},$ and
\begin{equation}
\lim_{k\rightarrow\infty}x_{j}^{k}=x_{j\ast}.
\end{equation}

\item[$\mathrm{(iii)}$] The following a priori and a posteriori error
estimates hold:
\begin{equation}
\left\Vert x_{j}^{k}-x_{j\ast}\right\Vert _{j}\leq\frac{(\alpha_{j})^{k}%
}{1-\alpha_{j}}\left\Vert x_{j}^{0}-x_{j}^{1}\right\Vert _{j},\text{ for all
}k=1,2,\ldots,
\end{equation}
\begin{equation}
\left\Vert x_{j}^{k}-x_{j\ast}\right\Vert _{j}\leq\frac{\alpha_{j}}%
{1-\alpha_{j}}\left\Vert x_{j}^{k-1}-x_{j}^{k}\right\Vert _{j},\text{ for all
}k=1,2,\ldots.
\end{equation}

\item[$\mathrm{(iv)}$] The rate of convergence of the sequence $\{x_{j}%
^{k}\}_{k=0}^{\infty},$ in $\mathrm{(ii)}$ above, is given by
\begin{equation}
\left\Vert x_{j}^{k}-x_{j\ast}\right\Vert _{j}\leq\alpha_{j}\left\Vert
x_{j}^{k-1}-x_{j\ast}\right\Vert _{j}\leq(\alpha_{j})^{k}\left\Vert x_{j}%
^{0}-x_{j\ast}\right\Vert _{j},\text{ for all }k=1,2,\ldots.
\end{equation}

\end{enumerate}
\end{theorem}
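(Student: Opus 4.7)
The plan is to reduce the statement to the classical Banach fixed point theorem applied to a suitable operator on the complete space $\mathcal{H}_j$. The first step is to observe that, exactly as in Fact~\ref{f1}, the $j$-contraction inequality forces $(T(x))_j$ to depend only on $x_j$: if $x_j = y_j$ then
\begin{equation}
\Vert (T(x))_j - (T(y))_j\Vert_j \leq \alpha_j \Vert x_j - y_j\Vert_j = 0.
\end{equation}
Hence the operator $T^j : \mathcal{H}_j \to \mathcal{H}_j$ defined by $T^j(x_j) := (T(x))_j$ (for any $x \in \mathcal{H}$ with $j$-th component $x_j$) is well-defined, and by the very inequality defining a $j$-contraction it is an $\alpha_j$-contraction on the (real, hence complete) Hilbert space $\mathcal{H}_j$.

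With this reduction, I would apply the classical Banach fixed point theorem (in the form of \cite[Theorem 2.1]{berinde-book}) to $T^j$ on $\mathcal{H}_j$. This immediately furnishes a unique $x_{j\ast} \in \mathcal{H}_j$ with $T^j(x_{j\ast}) = x_{j\ast}$. To translate this back, I would note that $z \in \operatorname*{Fix}{}^j T$ if and only if $z_j = x_{j\ast}$ (with the remaining components $z_i$, $i\neq j$, arbitrary), which proves part~(i).

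For (ii), given a Picard sequence $x^{k+1} = T(x^k)$ in $\mathcal{H}$, the $j$-th components satisfy
\begin{equation}
x_j^{k+1} = (T(x^k))_j = T^j(x_j^k),
\end{equation}
so $\{x_j^k\}$ is exactly the Picard iteration of $T^j$ starting at $x_j^0$; by the classical theorem it converges to $x_{j\ast}$. The a priori and a posteriori estimates in (iii) and the contractive rate in (iv) are then precisely the standard Banach estimates for $T^j$ with contraction constant $\alpha_j$, read off verbatim.

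The only real conceptual step is the reduction to $T^j$; everything after that is just invoking the classical Banach theorem, so I do not anticipate any serious obstacle. The one subtlety worth stating carefully is that the fixed points of $T$ in the componental sense form an entire ``slice'' $\{z \in \mathcal{H} \mid z_j = x_{j\ast}\}$ rather than a single point, which is why the uniqueness in (i) is asserted only for the component $x_j$ and not for the full vector $x$.
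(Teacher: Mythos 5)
Your proposal is correct and follows essentially the same route as the paper, which likewise reduces the statement to the classical Banach fixed point theorem of \cite[Theorem 2.1]{berinde-book}; the paper's sketch uses an auxiliary operator $U(x):=(0,\ldots,0,T_{j}(x),0,\ldots,0)$ on $\mathcal{H}$, whereas you work with the induced contraction $T^{j}$ on $\mathcal{H}_{j}$, which is a cosmetic difference. Your explicit verification that $(T(x))_{j}$ depends only on $x_{j}$ (the contraction analogue of Fact~\ref{f1}) is exactly the point that makes either reduction legitimate, so the argument is complete.
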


\begin{proof}
This can be proved exactly along the lines of the proof in \cite[Theorem
2.1]{berinde-book}. Alternatively, one can introduce an operator $U$
$:\mathcal{H}\rightarrow\mathcal{H}$ by $U(x):=(0,...,0,T_{j}(x),0,...,0)$ and
apply \cite[Theorem 2.1]{berinde-book} to it.
\end{proof}

\bigskip{}

To justify Theorem \ref{thm:comp-contract} we build an example of an operator
which is not an $\alpha$-contraction but is an $\alpha_{j}$-contractions for
some indices $j\in J,$ but not all. The original Banach fixed point theorem
would not apply to them but our theorem would.

\begin{example}%
\rm\
Let $T:%
\mathbb{R}
^{2}\rightarrow%
\mathbb{R}
^{2}$ be defined by
\begin{equation}
T(x_{1},x_{2})=\left(
\begin{array}
[c]{c}%
T_{1}(x_{1},x_{2})\\
T_{2}(x_{1},x_{2})
\end{array}
\right)  :=\left(
\begin{array}
[c]{c}%
\frac{x_{1}}{2}+3\\
8x_{2}%
\end{array}
\right)  .
\end{equation}
This $T$ is an $\alpha_{1}$-contraction with, e.g., $\alpha_{1}=1/2$ because
\begin{equation}
\left\vert T_{1}(x_{1},x_{2})-T_{1}(y_{1},y_{2})\right\vert =\frac{1}%
{2}\left\vert x_{1}-y_{1}\right\vert .
\end{equation}
But $T$ is not an $\alpha$-contraction because
\begin{equation}
\left\Vert T(x)-T(y)\right\Vert ^{2}=\frac{1}{4}(x_{1}-y_{1})^{2}%
+64(x_{2}-y_{2})^{2}%
\end{equation}
and there is no $\alpha\in\lbrack0,1)$ for which
\begin{equation}
\frac{1}{4}(x_{1}-y_{1})^{2}+64(x_{2}-y_{2})^{2}\leq\alpha^{2}\left(
(x_{1}-y_{1})^{2}+(x_{2}-y_{2})^{2}\right)
\end{equation}
for all $x,y\in%
\mathbb{R}
^{2}.$
\end{example}

\section{Regularity of component-wise quasi-nonexpansive
operators\label{sec:regularity}}

The notions of asymptotic regularity of sequences and operators play a central
role in fixed point theory, see, e.g., \cite{bau-com-book} or
\cite{cegielski-book}. We define next a notion of componental regularity.

\begin{definition}%
\rm\
Let $T:\mathcal{H}\rightarrow\mathcal{H}$ be a QNE operator and let $j\in J$.

\begin{enumerate}
\item[(i)] We say that $T$ is $j$\textit{-weakly regular}$\;$($j$-WR) if, for
any sequence $\{x^{k}\}_{k=0}^{\infty}\subseteq\mathcal{H}$ and some
$y\in\mathcal{H}$,
\begin{equation}
x_{j}^{k}\rightharpoonup y_{j}\text{ and }\lim_{k\rightarrow\infty}%
\Vert(T(x^{k}))_{j}-x_{j}^{k}\Vert_{j}=0\Longrightarrow y\in
\operatorname*{Fix}{}^{j}T. \label{eq:WRj}%
\end{equation}

\item[(ii)] If $T$ is $j$-weakly regular for all $j\in J$, then we say that
$T$ is \textit{CW-weakly regular} (CW-WR).

\item[(iii)] If (\ref{eq:WRj}) holds after all $j$ indices are removed from it
then we say that $T$ is weakly regular$\;$(WR) (cf. \cite[Definition
3.1]{CRZ18}).
\end{enumerate}
\end{definition}

The weak regularity of an operator $T$ means that $T-\operatorname*{Id}$ is
demi-closed at $0$ (cf. \cite{Opi67}).\medskip

\begin{fact}
\label{f-CW-WR}A QNE operator $T:\mathcal{H}\rightarrow\mathcal{H}$ is CW-WR
if and only if $T$ is WR.
\end{fact}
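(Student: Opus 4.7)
The equivalence should be driven by the product-space decomposition of weak convergence, the componental decomposition of the residual norm, and the identity $\operatorname{Fix}T=\bigcap_{j\in J}\operatorname{Fix}{}^{j}T$. Specifically, $x^{k}\rightharpoonup y$ in $\mathcal{H}$ is equivalent to $x_{j}^{k}\rightharpoonup y_{j}$ in $\mathcal{H}_{j}$ for every $j\in J$, and $\|(T(x^{k}))_{j}-x_{j}^{k}\|_{j}\leq\|T(x^{k})-x^{k}\|$ holds componentally, so the hypothesis of WR implies the hypothesis of $j$-WR for every $j$.

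For the direction CW-WR $\Rightarrow$ WR, the approach is direct: starting from the WR hypothesis $x^{k}\rightharpoonup y$ and $\|T(x^{k})-x^{k}\|\to 0$, the componental hypotheses of $j$-WR are met for every $j\in J$; applying CW-WR (i.e., $j$-WR for all $j$) yields $y\in\operatorname{Fix}{}^{j}T$ for every $j$, and intersecting gives $y\in\operatorname{Fix}T$, so $T$ is WR.

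For the direction WR $\Rightarrow$ CW-WR, fix $j\in J$ and take a sequence with $x_{j}^{k}\rightharpoonup y_{j}$ and $\|(T(x^{k}))_{j}-x_{j}^{k}\|_{j}\to 0$. The subtlety is that the hypothesis of WR requires the full residual to vanish in $\mathcal{H}$, while here only the $j$-th component is controlled. I would invoke the QNE assumption: for any $z\in\operatorname{Fix}T$, the inequality $\|T(x^{k})-z\|\leq\|x^{k}-z\|$ bounds the $T$-image and permits weakly convergent subsequences. A natural auxiliary construction sets $\tilde x_{j}^{k}:=x_{j}^{k}$ and $\tilde x_{i}^{k}:=z_{i}$ for $i\neq j$, so that $\tilde x^{k}\rightharpoonup\tilde y:=(z_{1},\ldots,z_{j-1},y_{j},z_{j+1},\ldots,z_{n})$ in $\mathcal{H}$ and the QNE estimate gives $\|T(\tilde x^{k})-z\|\leq\|x_{j}^{k}-z_{j}\|_{j}$, controlling every component of $T(\tilde x^{k})-z$ by the $j$-th one.

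The main obstacle is promoting this bound to full vanishing residual $\|T(\tilde x^{k})-\tilde x^{k}\|\to 0$ strongly in $\mathcal{H}$: the QNE estimate bounds the non-$j$ components of $T(\tilde x^{k})-z$ uniformly but does not immediately force them to vanish. Overcoming this will require combining the QNE bound with the $j$-WR residual hypothesis through a refined subsequence or diagonal argument; once established, applying WR to $\tilde x^{k}$ yields $T(\tilde y)=\tilde y$, in particular $(T(\tilde y))_{j}=y_{j}$, and the conclusion transfers to the original $y$ via the observation that $\operatorname{Fix}{}^{j}T$ is defined only through the $j$-th component equation.
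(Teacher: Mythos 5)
Your first direction (CW-WR $\Rightarrow$ WR) is correct and is exactly the paper's argument: weak convergence in $\mathcal{H}$ passes to each component, $\Vert(T(x^{k}))_{j}-x_{j}^{k}\Vert_{j}\leq\Vert T(x^{k})-x^{k}\Vert$ shows each $j$-WR hypothesis is met, and $\operatorname{Fix}T=\bigcap_{j\in J}\operatorname{Fix}{}^{j}T$ finishes it.

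The converse direction, however, is not proved. You correctly locate the difficulty --- the $j$-WR hypothesis controls only the $j$-th residual, while WR needs the full residual to vanish --- but the proposed repair does not close the gap, for three separate reasons. First, the auxiliary sequence $\tilde{x}^{k}$ (with $\tilde{x}_{i}^{k}:=z_{i}$ for $i\neq j$) destroys the one piece of information you have: since $(T(\cdot))_{j}$ depends on \emph{all} components of its argument, $(T(\tilde{x}^{k}))_{j}$ need not be close to $(T(x^{k}))_{j}$, so $\Vert(T(\tilde{x}^{k}))_{j}-\tilde{x}_{j}^{k}\Vert_{j}\rightarrow0$ is not inherited from the hypothesis. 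Second, the QNE estimate $\Vert T(\tilde{x}^{k})-z\Vert\leq\Vert x_{j}^{k}-z_{j}\Vert_{j}$ only \emph{bounds} the residual by a quantity that in general does not tend to zero; you explicitly defer the step that would give $\Vert T(\tilde{x}^{k})-\tilde{x}^{k}\Vert\rightarrow0$ (``once established\ldots''), and no subsequence or diagonal argument is actually exhibited, nor is it clear one exists. Third, even granting $\tilde{y}\in\operatorname{Fix}T$, the final transfer to $y$ fails: $\operatorname{Fix}{}^{j}T=\{w\in\mathcal{H}\mid(T(w))_{j}=w_{j}\}$ is \emph{not} determined by the $j$-th component alone, again because $(T(w))_{j}$ depends on all of $w$; so $(T(\tilde{y}))_{j}=\tilde{y}_{j}$ does not yield $(T(y))_{j}=y_{j}$ for a $y$ that merely shares its $j$-th component with $\tilde{y}$. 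For what it is worth, the paper itself proves in detail only the direction you handled correctly and dismisses the converse with ``can be proved similarly''; the obstruction you identified is genuine and is not resolved there either, but as written your argument for WR $\Rightarrow$ CW-WR is not a proof.
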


\begin{proof}
Suppose that $T$ is CW-WR. Let $x^{k}\rightharpoonup y$ and $\lim
_{k\rightarrow\infty}\Vert T(x^{k})-x^{k}\Vert=0$. Then $x_{j}^{k}%
\rightharpoonup y_{j}$ and $\lim_{k\rightarrow\infty}\Vert(T(x^{k}))_{j}%
-x_{j}^{k}\Vert_{j}=0$ for all $j\in J$. Since $T$ is $j$-WR, $y\in
\operatorname*{Fix}{}^{j}T$, for all $j\in J$, thus, $y\in\bigcap_{j\in
J}\operatorname*{Fix}{}^{j}T=\operatorname*{Fix}T$ which means that $T$ is WR.
The converse can be proved similarly.
\end{proof}

\begin{theorem}
\label{f-WC}Let $T:\mathcal{H}\rightarrow\mathcal{H}$ be an $r$-CW-SQNE and
CW-WR operator, where $r=(\rho_{1},\rho_{2},...,\rho_{n})$ with $\rho
_{j}>0,j\in J$, and let the sequence $\{x^{k}\}_{k=0}^{\infty}$ be generated
by the iteration
\begin{equation}
x_{j}^{k+1}=(T(x^{k}))_{j},\text{\ }k\geq0,\label{eq:iter-process}%
\end{equation}
$j\in J$ , where $x^{0}\in\mathcal{H}$ is arbitrary. Then $\{x^{k}%
\}_{k=0}^{\infty}$ converges weakly to some $x^{\ast}\in\operatorname*{Fix}T$.
\end{theorem}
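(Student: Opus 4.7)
The plan is to follow the classical blueprint for proving weak convergence of a Fej\'er-monotone iterate: derive a global Fej\'er-type estimate by summing the componental SQNE inequalities, extract asymptotic regularity, use the CW-WR hypothesis to identify all weak cluster points as fixed points, and then invoke the standard uniqueness-of-weak-cluster-point lemma. The only twist relative to the classical setting is that the hypotheses are imposed componentally, so the main bookkeeping is to pass between the componental estimates in $\Vert\cdot\Vert_{j}$ and the product norm $\Vert\cdot\Vert^{2}=\sum_{j\in J}\Vert\cdot\Vert_{j}^{2}$.

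First I would note that the recursion $x_{j}^{k+1}=(T(x^{k}))_{j}$ for every $j\in J$ is nothing but $x^{k+1}=T(x^{k})$. Since $T$ is $r$-CW-SQNE and in particular CW-QNE, Fact \ref{f7}(ii) gives
\begin{equation*}
\operatorname*{Fix}T=\prod_{j\in J}F_{j}(T)\neq\emptyset,
\end{equation*}
each $F_{j}(T)$ being nonempty because it contains the $j$-th coordinate of any point of $\operatorname*{Fix}{}^{j}T$. Pick any $z\in\operatorname*{Fix}T$; then $z\in\operatorname*{Fix}{}^{j}T$ for every $j\in J$, so the $(\rho_{j},j)$-SQNE inequality \eqref{e-j-SQNE} applies with this $z$ in each coordinate.

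Setting $\rho:=\min_{j\in J}\rho_{j}>0$ and summing the componental SQNE inequalities over $j\in J$ I obtain
\begin{equation*}
\Vert x^{k+1}-z\Vert^{2}\leq\Vert x^{k}-z\Vert^{2}-\rho\,\Vert x^{k+1}-x^{k}\Vert^{2}.
\end{equation*}
This yields two things at once: $\{x^{k}\}$ is Fej\'er monotone with respect to $\operatorname*{Fix}T$ and hence bounded; and telescoping gives $\sum_{k=0}^{\infty}\Vert x^{k+1}-x^{k}\Vert^{2}<\infty$, so $\Vert T(x^{k})-x^{k}\Vert\to 0$ and in particular $\Vert(T(x^{k}))_{j}-x_{j}^{k}\Vert_{j}\to 0$ for every $j\in J$.

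Boundedness now produces a weakly convergent subsequence $x^{k_{l}}\rightharpoonup y\in\mathcal{H}$. Since the projection onto each coordinate factor is bounded and linear, $x_{j}^{k_{l}}\rightharpoonup y_{j}$ in $\mathcal{H}_{j}$ for each $j$. Combined with the componental asymptotic regularity, the $j$-WR hypothesis, applied along the subsequence, forces $y\in\operatorname*{Fix}{}^{j}T$ for every $j\in J$, hence $y\in\operatorname*{Fix}T$. To upgrade this to convergence of the full sequence, I would invoke the standard Fej\'er-monotonicity lemma: a Fej\'er-monotone sequence all of whose weak cluster points lie in the reference set converges weakly to a point of that set (cf.\ \cite[Theorem 5.5]{bau-com-book} or \cite[Theorem 3.4.3]{cegielski-book}). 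The main obstacle is really just the bookkeeping in the second step---ensuring that summing the componental SQNE inequalities produces a usable negative quadratic term $-\rho\Vert x^{k+1}-x^{k}\Vert^{2}$ with a uniform positive constant---after which the remainder is a routine Opial-type argument.
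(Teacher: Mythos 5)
Your proposal is correct and follows essentially the same route as the paper's proof: both derive the componental Fej\'er/SQNE estimate with respect to a point of $\operatorname*{Fix}T$, obtain boundedness and $\Vert(T(x^{k}))_{j}-x_{j}^{k}\Vert_{j}\to 0$, use the CW-WR hypothesis together with Fact \ref{f7}(ii) to place every weak cluster point in $\operatorname*{Fix}T$, and conclude by the standard Fej\'er-monotonicity argument (the paper cites \cite[Theorem 2.16(ii)]{bb96}). Your summed estimate with $\rho=\min_{j}\rho_{j}$ and the observation that $\operatorname*{Fix}T=\prod_{j}F_{j}(T)\neq\emptyset$ are just slightly more explicit versions of the steps the paper leaves as ``standard arguments.''
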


\begin{proof}
By assumption, $T$ is $\rho_{j}$-SQNE, $j\in J$, i.e., for any $z=(z_{1}%
,z_{2},...,z_{n})\in\operatorname*{Fix}T=\bigcap_{j\in J}\operatorname*{Fix}%
{}^{j}T$ it holds%

\begin{equation}
\Vert x_{j}^{k+1}-z_{j}\Vert_{j}^{2}=\Vert(T(x^{k}))_{j}-z_{j}\Vert_{j}%
^{2}\leq\Vert x_{j}^{k}-z_{j}\Vert_{j}^{2}-\rho_{j}\Vert(T(x^{k}))_{j}%
-x_{j}^{k}\Vert_{j}^{2}\text{.}%
\end{equation}

That ${\operatorname*{Fix}}T$ is nonempty follows from Definition \ref{d2}.
Using standard arguments this leads to the boundedness of $x_{j}^{k}$, $j\in
J$ and to $\Vert(T(x^{k}))_{j}-x_{j}^{k}\Vert_{j}\rightarrow0$ as
$k\rightarrow\infty$. Thus, there exists a subsequence $\{x_{j}^{n_{k}%
}\}_{k=0}^{\infty}$ which converges weakly to some $x_{j}^{\ast}$, $j\in J$.
Let $x^{\ast}=(x_{1}^{\ast},x_{2}^{\ast},...,x_{n}^{\ast})$. Because $T$ is
$j$-weakly regular, we have $x^{\ast}\in\operatorname*{Fix}{}^{j}T$, $j\in J$.
This and Fact \ref{f7}(ii) yields $x^{\ast}\in\operatorname*{Fix}T$. By
standard arguments, like in \cite[Theorem 2.16(ii)]{bb96}, the whole sequence
$\{x^{k}\}_{k=0}^{\infty}$ converges weakly to $x^{\ast}$.
\end{proof}

\section{Nonlinear DROP as a simultaneous projection method with
component-wise weights\label{sec:nonlin-drop}}

Many problems in mathematics, in physical sciences and in real-world
applications can be modeled as a \textit{convex feasibility problem }(CFP);
i.e., a problem of finding a point $x^{\ast}\in Q:=\cap_{i=1}^{m}Q_{i}$ in the
intersection of finitely many closed convex sets $Q_{i}\subseteq%
\mathbb{R}%
^{n}$ in the finite-dimensional Euclidean space. The literature on this
subject is enormously large, see, e.g., \cite{bb96} or \cite{bau-com-book} and
\cite{cegielski-book} and references therein.

Fully-simultaneous (parallel) algorithmic schemes for the CFP employ iterative
steps of the form
\begin{equation}
x^{k+1}=x^{k}+\lambda_{k}\left(  \sum_{i=1}^{m}w_{i}\left(  P_{Q_{i}}%
(x^{k})-x^{k}\right)  \right)  , \label{eq:simult-alg}%
\end{equation}
where $P_{\Omega}(x)$ stands for the \textit{orthogonal (nearest Euclidean
distance) projection} of a point $x$ onto the closed convex set $\Omega,$ the
parameters $\{w_{i}\}_{i=1}^{m}$ are a \textit{system of weights} such that
$w_{i}>0$ for all $i=1,2,\ldots,m$ and $\sum_{i=1}^{m}w_{i}=1$, and the
\textit{relaxation parameters }$\{\lambda_{k}\}_{k=0}^{\infty}$\textit{\ }are
user-chosen and, in most convergence analyses, must remain in a certain fixed
interval, in order to guarantee convergence.

For linear equations, represented by hyperplanes, i.e.,
\begin{equation}
Q_{i}=H_{i}:=\left\{  x\in%
\mathbb{R}%
^{n}\mid\langle a^{i},x\rangle=b_{i}\right\}  , \label{eq:hyper}%
\end{equation}
for $i=1,2,\ldots,m$, the orthogonal projection $P_{i}(z)$ of a point $z\in%
\mathbb{R}%
^{n}$ onto $H_{i}$ is
\begin{equation}
P_{i}(z)~=~z+\frac{b_{i}-\langle a^{i},z\rangle}{\left\Vert a^{i}\right\Vert
_{2}^{2}}a^{i}\,, \label{2.1.4}%
\end{equation}
where $a^{i}=(a_{j}^{i})_{j=1}^{n}\in%
\mathbb{R}%
^{n}$, $a^{i}\neq0$, and $b_{i}\in%
\mathbb{R}%
$ are the given data of the linear equations and $\Vert\cdot\Vert_{2}$ is the
Euclidean norm. The iterative steps of (\ref{eq:simult-alg}) then take the
form
\begin{equation}
x^{k+1}~=~x^{k}+\lambda_{k}\sum_{i=1}^{m}w_{i}\frac{b_{i}-\langle a^{i}%
,x^{k}\rangle}{\left\Vert a^{i}\right\Vert _{2}^{2}}a^{i}. \label{eq:cimi}%
\end{equation}
This algorithm was first proposed by Cimmino \cite{Cimmino} (read about the
profound impact of this paper on applied scientific computing in Benzi's paper
\cite{benzi}) and generalized to convex sets by Auslender \cite{AUSLENDER76}.

For the case of equal weights $w_{i}=1/m$ (\ref{eq:cimi}) becomes
\begin{equation}
x^{k+1}~=~x^{k}+\frac{\lambda_{k}}{m}\sum_{i=1}^{m}\frac{b_{i}-\langle
a^{i},x^{k}\rangle}{\left\Vert a^{i}\right\Vert _{2}^{2}}a^{i}. \label{2.1.6}%
\end{equation}

When the $m\times n$ system matrix $A=(a_{j}^{i})$ is sparse, as often happens
in some important real-world applications, only a relatively small number of
the elements $\{a_{j}^{1},a_{j}^{2},\ldots,a_{j}^{m}\}$ in the $j$-th column
of $A$ are nonzero, but in (\ref{2.1.6}) the sum of their contributions is
divided by the relatively large $m$ -- slowing down the progress of the
algorithm. This observation led us, in \cite{CEHN08}, to consider replacement
of the factor $1/m$ in (\ref{2.1.6}) by a factor that depends only on the
number of \textit{nonzero} elements in the set $\{a_{j}^{1},a_{j}^{2}%
,\ldots,a_{j}^{m}\}$. Specifically, for each $j=1,2,\ldots,n$, we denoted by
$s_{j}$ the number of nonzero elements in column $j$ of the matrix $A$,
assuming that all columns of $A$ are nonzero, thus $s_{j}\neq0,$ for all $j.$
Then we replaced (\ref{2.1.6}) by
\begin{equation}
x_{j}^{k+1}~=~x_{j}^{k}+\frac{\lambda_{k}}{s_{j}}\sum_{i=1}^{m}\frac
{b_{i}-\langle a^{i},x^{k}\rangle}{\left\Vert a^{i}\right\Vert _{2}^{2}}%
a_{j}^{i}\,,\text{ for  }j=1,2,\ldots,n.\label{2.1.62}%
\end{equation}
This iterative formula is the backbone of the proposed fully-simultaneous
Diagonally-Relaxed Orthogonal Projections (DROP) method for linear equations
in \cite{CEHN08}. Certainly, if the matrix $A$ is sparse, then the $s_{j}$
values will be much smaller than $m$ and using them instead of the large $m$
will enlarge the additive updates in the iterative process and lead to faster
initial convergence.

This leads naturally to the question whether the weights $w_{i}$ in
(\ref{eq:simult-alg}) be allowed to depend on the component index $j$ as
iterations proceed, without loosing the guaranteed convergence of the
algorithm? Or, phrased mathematically, may the iterations proceed according
to
\begin{equation}
x_{j}^{k+1}=x_{j}^{k}+\lambda_{k}\left(  \sum_{i=1}^{m}w_{ij}\left(  \left(
P_{Q_{i}}(x^{k})\right)  _{j}-x_{j}^{k}\right)  \right)  ,\text{ for
}j=1,2,\ldots,n, \label{eq:unsetteled}%
\end{equation}
where the parameters $\{w_{ij}\}_{i=1}^{m}$ form $n$ \textit{systems of
weights} such that, for $j=1,2,\ldots,n,$ $w_{ij}\geq0$ for all $i=1,2,\ldots
,m,$ and $\sum_{i=1}^{m}w_{ij}=1$? If such component-wise relaxation is
possible then we could use it to exploit sparsity of the underlying problem
and to control asynchronous (block) iterations.

The convergence of such a scheme, like (\ref{2.1.62}), for the linear case is
studied in \cite{CEHN08} but the general case of (\ref{eq:unsetteled})
remained unsettled.

With $T_{i}:=P_{Q_{i}},$ for all $i=1,2,\ldots,m,$ and with a fixed
$\lambda\in(0,2],$ such that $\lambda_{k}=\lambda,$ for all $k\geq0,$ Eq.
(\ref{eq:unsetteled}) describes the iterative process in
(\ref{eq:iter-process}) with the operator in (\ref{eq:nonlin-drop}). The
following theorem yields the weak convergence of sequences generated by the
iterative process (\ref{eq:unsetteled}).

\begin{theorem}
\label{thm:final}Let $w_{ij}\geq0$ with $\sum_{i=1}^{m}w_{ij}=1$, $i\in I$,
$j\in J$, and $\lambda\in(0,2)$. If $T_{i}:\mathcal{H}\rightarrow\mathcal{H}$
are CW-cutters and CW-WR operators having a common fixed point then any
sequence generated by the iterative process
\begin{equation}
x_{j}^{k+1}=x_{j}^{k}+\lambda\left(  \sum_{i=1}^{m}w_{ij}\left(  \left(
T_{i}(x^{k})\right)  _{j}-x_{j}^{k}\right)  \right)  ,\text{ for }%
j=1,2,\ldots,n\text{,} \label{e-final}%
\end{equation}
where $x^{0}\in\mathcal{H}$ is arbitrary, converges weakly to some point
$x^{\ast}\in\operatorname*{Fix}T=\bigcap_{j\in J}\operatorname*{Fix}{}^{j}T$.
\end{theorem}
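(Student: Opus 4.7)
The plan is to rewrite the iteration (\ref{e-final}) as the Picard-like iteration $x^{k+1} = T(x^k)$ for the operator $T:\mathcal{H}\to\mathcal{H}$ defined component-wise by
$$(T(x))_j := x_j + \lambda \sum_{i=1}^m w_{ij}\bigl((T_i(x))_j - x_j\bigr), \quad j\in J,$$
and then invoke Theorem \ref{f-WC}. For this, two properties of $T$ must be verified: $T$ is $r$-CW-SQNE for some strictly positive vector $r$, and $T$ is CW-WR.

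The first property falls out of Fact \ref{f9}. The assumption $\sum_{i=1}^m w_{ij}=1$ gives $w_{\cdot j}=1$, so $\lambda\in(0,2)=(0,2/w_{\cdot j})$ is admissible, and Fact \ref{f9} yields that $T$ is $r$-CW-SQNE with $\rho_j = (2-\lambda)/\lambda>0$ for every $j\in J$. Moreover, since the $T_i$ share a common fixed point $z$, we have $\operatorname{Fix}T \supseteq \bigcap_{i\in I}\operatorname{Fix}T_i \neq\emptyset$.

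The substantive step is to establish that $T$ is CW-WR. Fix $j\in J$ and consider a sequence $\{x^k\}\subseteq\mathcal{H}$ and a point $y\in\mathcal{H}$ with $x_j^k \rightharpoonup y_j$ and $\|(T(x^k))_j - x_j^k\|_j \to 0$; the latter means $\bigl\|\sum_i w_{ij}((T_i(x^k))_j - x_j^k)\bigr\|_j \to 0$. To push this smallness down to each \emph{individual} residual $\|(T_i(x^k))_j - x_j^k\|_j$ (for $i$ with $w_{ij}>0$), I would apply the $j$-cutter inequality for each $T_i$ at the common fixed point $z$, which rearranges to
$$\langle (T_i(x^k))_j - x_j^k,\, z_j - x_j^k\rangle_j \;\geq\; \|(T_i(x^k))_j - x_j^k\|_j^2\;\geq\;0.$$
Taking the convex combination with weights $w_{ij}$ on the left and using Cauchy--Schwarz together with the boundedness of $\{x_j^k\}$ (guaranteed by weak convergence) gives
$$\sum_{i=1}^m w_{ij}\|(T_i(x^k))_j - x_j^k\|_j^2 \;\leq\; \Bigl\langle \sum_i w_{ij}\bigl((T_i(x^k))_j - x_j^k\bigr),\, z_j - x_j^k\Bigr\rangle_j \;\to\; 0.$$
For each $i$ with $w_{ij}>0$ the $j$-WR of $T_i$ then forces $(T_i(y))_j = y_j$, and summing with weights yields $(T(y))_j = y_j$, i.e., $y\in\operatorname{Fix}^j T$. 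This proves CW-WR of $T$.

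With $T$ shown to be $r$-CW-SQNE (for strictly positive $r$) and CW-WR, Theorem \ref{f-WC} applies directly and delivers weak convergence of $\{x^k\}$ to some $x^\ast \in \operatorname{Fix}T = \bigcap_{j\in J}\operatorname{Fix}^j T$, which is exactly the conclusion sought. The main obstacle is the CW-WR step, specifically converting smallness of the \emph{aggregated} increment into smallness of each \emph{individual} increment $(T_i(x^k))_j - x_j^k$; the cutter positivity of the inner products is precisely what allows the convex combination to be controlled from below by the sum of squared residuals.
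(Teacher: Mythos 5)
Your proposal is correct and follows the same overall architecture as the paper: rewrite (\ref{e-final}) as $x^{k+1}_j=(T(x^k))_j$ with $T$ given by (\ref{eq:nonlin-drop}) (noting $w_{\cdot j}=1$), invoke Fact \ref{f9} for the $r$-CW-SQNE property with $\rho_j=(2-\lambda)/\lambda$, establish that $T$ is CW-WR, and conclude via Theorem \ref{f-WC}. The one place where you genuinely diverge is the CW-WR step: the paper disposes of it in a single line by combining Fact \ref{f-CW-WR} with the cited result \cite[Corollary 5.3(i)]{CRZ18} on weak regularity of convex combinations/relaxations, whereas you prove it from scratch. Your argument is sound: the $j$-cutter inequality at the common fixed point $z$ rearranges to $\langle (T_i(x^k))_j-x_j^k,\,z_j-x_j^k\rangle_j\geq\|(T_i(x^k))_j-x_j^k\|_j^2$, so the weighted sum of squared individual residuals is dominated by $\bigl\langle\sum_i w_{ij}((T_i(x^k))_j-x_j^k),\,z_j-x_j^k\bigr\rangle_j$, which tends to $0$ by Cauchy--Schwarz, the vanishing of the aggregated increment, and the boundedness of $\{x_j^k\}$ coming from weak convergence; then the $j$-WR of each $T_i$ with $w_{ij}>0$ gives $(T_i(y))_j=y_j$ and hence $(T(y))_j=y_j$. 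What your route buys is a self-contained proof that exposes exactly why the cutter hypothesis (and not mere quasi-nonexpansivity) is needed to pass from smallness of the aggregated increment to smallness of each individual residual; what the paper's route buys is brevity, at the price of an external reference. Both are valid, and your version also explicitly records the nonemptiness of $\operatorname{Fix}T$, which the paper leaves implicit.
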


\begin{proof}
Iteration (\ref{e-final}) can be written as $x_{j}^{k+1}=(T(x^{k}))_{j}$,
where $T$ is defined by (\ref{eq:nonlin-drop}) with $\lambda_{j}=\lambda$. By
Fact \ref{f9}, $T$ is $r$-CW-SQNE, where $r=\frac{{\textstyle2-\lambda}%
}{{\textstyle\lambda}}{\textstyle e}$ with $e:=(1,1,...,1)\in%
\mathbb{R}
^{n}$. By Fact \ref{f-CW-WR} and \cite[Corollary 5.3(i)]{CRZ18}, $T$ is CW-WR.
Thus, the theorem follows from Theorem \ref{f-WC}.
\end{proof}

\bigskip

It is, of course, possible to replace the constant relaxation parameter
$\lambda$ in (\ref{e-final}) with $\lambda_{k}\in\lbrack\varepsilon
,2-\varepsilon]$ for some positive $\varepsilon$ and the proof will remain
true. Therefore, since the operator $T$ in the above proof is a CW-WR
operator, and so are also orthogonal projections, Theorem \ref{thm:final} is a
generalization of the convergence theorem of the DROP method, \cite[Theorem
2.3]{CEHN08}.

\bigskip{}

\textbf{Acknowledgement}. We thank Eliahu Levy and Daniel Reem for some
fruitful discussions on this topic. The work of Yair Censor is supported by
the ISF-NSFC joint research program Grant No. 2874/19.

\hfill{}

\end{document}